\newtheoremstyle{dotless}{}{}{\itshape}{}{\bfseries}{}{}{}
\theoremstyle{dotless}
\theoremstyle{plain}
\newtheorem{thm}{Theorem}[section]
\newtheorem{prop}[thm]{Proposition}
\newtheorem{cor}[thm]{Corollary}
\theoremstyle{definition}
\newtheorem{defn}[thm]{Definition}
\newtheorem{rem}[thm]{Remark}
\newtheorem{exa}[thm]{Example}
\newcommand{\N} {\mathbb{N}}
\newcommand{\R} {\mathbb{R}}
\newcommand{\C} {\mathbb{C}}
\newcommand{\K} {\mathbb{K}}
\newcommand{\D} {\mathbb{D}}
\newcommand{\F} {\mathcal{F}(\Omega)}
\newcommand{\FE} {\mathcal{F}(\Omega,E)}
\DeclareMathOperator{\id}{id}
\providecommand{\differential}{\mathrm{d}}
\renewcommand{\d}{\differential}
\newcommand{\vertiii}[1]{{\left\vert\kern-0.25ex\left\vert\kern-0.25ex\left\vert #1 
    \right\vert\kern-0.25ex\right\vert\kern-0.25ex\right\vert}}
\newcommand{\fakephantomsection}{%
  \Hy@GlobalStepCount\Hy@linkcounter%
  \Hy@MakeCurrentHref{\@currenvir.\the\Hy@linkcounter}
  \Hy@raisedlink{\hyper@anchorstart{\@currentHref}\hyper@anchorend}%
}
\begin{document}

\title[On isometric linearisation, existence and uniqueness of preduals]{On linearisation, existence and uniqueness of preduals: The isometric case}
\author[K.~Kruse]{Karsten Kruse\,\orcidlink{0000-0003-1864-4915}}
\address[Karsten Kruse]{University of Twente, Department of Applied Mathematics, P.O. Box 217, 7500 AE Enschede, The Netherlands, and Hamburg University of Technology, Institute of Mathematics, Am Schwarzenberg-Campus~3, 21073 Hamburg, Germany}

\email{k.kruse@utwente.nl}

\subjclass[2020]{Primary 46B10 Secondary 46A70, 46E10, 46E15}

\keywords{dual space, predual, linearisation, uniqueness, isometric, Saks space}

\date{\today}
\begin{abstract}
We study the problem of existence and uniqueness of isometric Banach preduals of a Banach space. 
We derive necessary and sufficient conditions for the existence of an isometric Banach predual of a Banach space $X$. 
Then we focus on the case that $X=\F$ is a Banach space of scalar-valued functions on a non-empty set $\Omega$ and 
describe those spaces which admit a special isometric Banach predual, namely a \emph{strong isometric Banach linearisation}, 
i.e.~there are a Banach space $Y$, a map $\delta\colon\Omega\to Y$ and an isometric isomorphism 
$T\colon\F\to Y^{\ast}$ such that $T(f)\circ \delta= f$ for all $f\in\F$. Finally, we give necessary and sufficient 
conditions for Banach spaces $\F$ with a strong isometric Banach linearisation to have a (strongly) unique isometric Banach predual. 
\end{abstract}
\maketitle

\section{Introduction}\label{sect:intro}

This paper centres on the problem of giving necessary and sufficient conditions for the existence and uniquenes 
of an isometric Banach predual of a Banach space. An \emph{isometric Banach predual} of a Banach space $X$ is a tuple 
$(Y,\varphi)$ of a Banach space $Y$ and an isometric isomorphism $\varphi\colon X\to Y^{\ast}$ where $Y^{\ast}$ is the dual space 
of $Y$ equipped with the dual norm. 
The Dixmier--Ng theorem \cite[Theorem 1, p.~279]{ng1971} says that a Banach space $(X,\|\cdot\|)$ has an isometric Banach predual 
if there exists a locally convex Hausdorff topology $\tau$ on $X$ such that the $\|\cdot\|$-closed unit ball $B_{\|\cdot\|}$ is 
$\tau$-compact. Other variants of this theorem are due to Dixmier \cite[Th\'{e}or\`{e}me 19, p.~1069]{dixmier1948}, 
Waelbroeck \cite[Proposition 1, p.~122]{waelbroeck1966} and Kaijser \cite[Theorem 1, p.~325]{kaijser1977}.  

Being familiar with the theory of Saks spaces \cite{cooper1978} and the mixed topology introduced by Wiweger \cite{wiweger1961}, 
we recognize that the condition that $B_{\|\cdot\|}$ is $\tau$-compact means the that the triple $(X,\|\cdot\|,\tau)$ is a 
\emph{semi-Montel Saks space}, i.e.~$(X,\|\cdot\|,\tau)$ is a Saks space and $(X,\gamma)$ a semi-Montel space where 
$\gamma\coloneqq \gamma(\|\cdot\|,\tau)$ denotes the mixed topology. 
As a first step, we show in \prettyref{prop:dixmier_ng_mixed} that the a priori weaker condition that $(X,\|\cdot\|,\tau)$ 
is a semi-reflexive Saks space is already sufficient to guarantee that the Banach space $X$ has an isometric Banach predual, 
even an isometric prebidual (see \prettyref{defn:isom_prebidual}). Then we show in \prettyref{cor:existence_isom_banach_predual} 
that this condition is also necessary and equivalent to the compactness of $B_{\|\cdot\|}$ w.r.t.~to a coarser locally 
convex Hausdorff topology. This allows us discover many Banach spaces with an isometric Banach predual in 
\prettyref{ex:subspace_cont_mixed}. 

The examples from \prettyref{ex:subspace_cont_mixed} have another thing in common. They are Banach spaces $\F$ 
of scalar-valued functions on a non-empty set $\Omega$. For such Banach spaces we are interested in the question whether they have a 
(strongly) unique isometric Banach predual. A Banach space $X$ with an isometric Banach predual is said to have a 
\emph{strongly unique isometric Banach predual} if for all isometric Banach preduals $(Y,\varphi)$ and $(Z,\psi)$ of $X$ 
and all isometric isomorphisms $\alpha\colon Z^{\ast}\to Y^{\ast}$ there is an isometric isomorphism $\lambda\colon Y\to Z$ such that 
$\lambda^{t}=\alpha$. If this is fulfilled without the requirement that $\lambda^{t}=\alpha$, then $X$ is said to have 
a \emph{unique isometric Banach predual}, and it is an open problem whether both notions of uniqueness are equivalent, 
see \cite[Problem (2), p.~186]{godefroy1989}. 
The question whether a Banach space has a (strongly) unique isometric Banach predual got a lot of attention. 
We refer the reader in the general setting of Banach spaces to the paper by Brown and Ito \cite{brown1980} 
and the nice detailed survey by Godefroy \cite{godefroy1989}. In particular, examples of Banach spaces with a 
strongly unique isometric Banach predual are: von Neumann algebras by Sakai \cite[1.13.3 Corollary, p.~30]{sakai1998}, 
the space $H^{\infty}$ of bounded holomorphic functions on the complex open unit disc by Ando \cite[Theorem 1, p.~34]{ando1978}, 
the space of Bloch functions that vanish at the origin by Nara \cite[Corollary 2, p.~99]{nara1990}, 
biduals of non-reflexive Banach spaces which are $M$-ideals in their bidual by Harmand, Werner and Werner 
\cite[Proposition 2.10, p.~122]{harmand1993}, Banach spaces with isometric separable $L$-embedded Banach predual by 
Pfitzner \cite[Theorems 2, 3, p.~1040]{pfitzner2007}, the non-commutative analytic Toeplitz algebras $\mathfrak{L}_{n}$, $n\geq 2$, 
and every free semigroup algebra by Davidson and Wright \cite[Theorems 3.3, 3.5, p.~416--417]{davidson2011}, 
certain weak-star closed linear subspaces of the space $L(X)$ of continuous linear operators from $X$ to $X$ for a separable 
reflexive Banach space $X$ by Godefroy \cite[Theorem 2.1, p.~811]{godefroy2014}, 
the space $L(X,Y)$ of continuous linear operators between to Banach spaces $X$ and $Y$ by Gardella and Thiel 
\cite[Corollary 5.6, p.~18]{gardella2020} if $Y$ has a strongly unique isometric Banach predual, 
and certain spaces of scalar-valued Lipschitz continuous functions by Weaver \cite[Theorems 3.2, 3.3, p.~471--472]{weaver2018a}.\footnote{See the footnote on page 17.}

We give a characterisation of Banach spaces $\F$ of scalar-valued functions on a non-empty set $\Omega$ 
having a (strongly) unique isometric Banach predual in \prettyref{cor:strongly_unique_isom_predual_without_lin_ind} 
and \prettyref{cor:unique_isom_predual_without_lin_ind}. We derive our characterisation by using 
strong isometric Banach linearisations of $\F$ and the connection between strong (uniqueness) and an equivalence 
relation (see \prettyref{defn:isom_predual_equivalent}) on the family of isometric Banach preduals of a Banach space, 
see \prettyref{prop:isom_unique_equivalent} and \prettyref{prop:isom_strongly_unique_equivalent}. 
A \emph{strong isometric Banach linearisation} of a Banach space $\F$ is a triple $(\delta,Y,T)$ of a Banach space 
$Y$, a map $\delta\colon\Omega\to Y$ and an isometric isomorphism $T\colon\F\to Y^{\ast}$ such that 
$T(f)\circ \delta= f$ for all $f\in\F$. In particular, if $\F$ admits a strong isometric Banach linearisation, 
then $(Y,T)$ is an isometric Banach predual of $\F$. A classical example is the (completion of the) 
projective tensor product $Y\coloneqq F\widehat{\otimes}_{\pi} G$ of two Banach spaces $F$ and $G$ which induces a strong isometric 
Banach linearisation of the Banach space $\mathscr{L}(F\times G)$ of continuous bilinear forms on $\Omega\coloneqq F\times G$ 
via the map given by $\delta(x,y)(B)\coloneqq B(x,y)$ for $(x,y)\in F\times G$ and $B\in\mathscr{L}(F\times G)$, 
see e.g.~Ryan \cite[Theorem 2.9, p.~22]{ryan2002}. Using the Dixmier--Ng theorem, 
other special cases of isometric Banach linearisations of weighted Banach spaces 
of holomorphic or harmonic functions were derived by Mujica \cite[2.1 Theorem, p.~869]{mujica1991}, 
Bonet, Doma\'nski and Lindstr\"om \cite[(c), p.~243]{bonet2001}, 
Laitila and Tylli \cite[(iv), p.~13]{laitila2006}, 
Beltr\'an \cite[Remark 4, p.~284]{beltran2012} and \cite[p.~67]{beltran2014}, 
Jord\'a \cite[Proposition 6, p.~3]{jorda2013}, 
Gupta and Baweja \cite[Theorem 3.1 (Linearization Theorem), p.~128]{gupta2016}, 
Quang \cite[Theorem 3.5 (Linearization), p.~19]{quang2023}, 
and Aron, Dimant, Garc\'{i}a-Lirola and Maestre \cite[Proposition 2.3 (c), p.~3029]{aron2024}. 
We show that a Banach space $(\F,\|\cdot\|)$ of scalar-valued functions on a non-empty set $\Omega$ admits a 
strong isometric Banach linearisation if and only if there is a locally convex Hausdorff topology $\tau$ on $\F$ such that 
$B_{\|\cdot\|}$ is $\tau$-compact and $\tau$ finer than the topology of pointwise convergence. We also give several other 
necessary and sufficient conditions for the existence of a strong isometric Banach linearisation 
in \prettyref{thm:existence_siB_linearisation}. Besides the application to the question of (strong) uniqueness, 
we give another application of strong isometric Banach linearisations to weakly compact composition 
operators in \prettyref{thm:compact_operator}. We refer the reader who is also interested in the corresponding results 
of the present paper in the non-isometric Banach setting or even in the locally convex setting to \cite{kruse2024,kruse2025}.

\section{Notions and preliminaries}
\label{sect:notions}

In this short section we recall some basic notions and results from \cite[Sections 2, 3]{kruse2024,kruse2025} 
and present some preliminary results on dual Banach spaces and their Banach preduals. 
For a locally convex Hausdorff space $X$ over the field $\K\coloneqq\R$ or $\C$ we denote by $X'$ the topological linear dual space 
of $X$. If we want to emphasize the dependency on the locally convex Hausdorff topology $\tau$ of $X$, we write $(X,\tau)$ and 
$(X,\tau)'$ instead of just $X$ and $X'$, respectively. If $(X,\|\cdot\|)$ is a normed space, we write $\tau_{\|\cdot\|}$ 
for the locally convex Hausdorff topology induced by $\|\cdot\|$ and set $X^{\ast}\coloneqq (X,\|\cdot\|)^{\ast}\coloneqq 
(X,\tau_{\|\cdot\|})'$. We always equip $X^{\ast}$ with the dual norm 
given by $\|x^{\ast}\|_{X^{\ast}}\coloneqq \sup_{x\in B_{\|\cdot\|}}|x^{\ast}(x)|$ for $x^{\ast}\in X^{\ast}$ where 
$B_{\|\cdot\|}\coloneqq\{x\in X\;|\;\|x\|\leq 1\}$ denotes the $\|\cdot\|$-closed unit ball of $X$. 
Further, for a linear subspace $V\subset X'$ of the dual of a locally convex Hausdorff space $X$ we denote by 
$\sigma(X,V)$ the locally convex topology on $X$ induced by the system of seminorms given by
\[
p_{N}(x)\coloneqq\sup_{y\in N}|y(x)|,\quad x\in X,
\]
for finite sets $N\subset V$. We note that $\sigma(X,V)$ is Hausdorff if and only if $V$ separates the points of $X$. 
Furthermore, for a continuous linear map $T\colon X\to Y$ between two locally convex Hausdorff spaces $X$ and $Y$ we denote by 
$T^{t}\colon Y'\to X'$, $y'\mapsto y'\circ T$, the \emph{dual map} of $T$. 
Moreover, for two locally convex Hausdorff topologies $\tau_{0}$ and $\tau_{1}$ 
on $X$ we write $\tau_{0}\leq\tau_{1}$ if $\tau_{0}$ is coarser than $\tau_{1}$. 
We write $\tau_{\operatorname{co}}$ for the \emph{compact-open topology}, i.e.~the topology of uniform convergence on compact 
subsets of $\Omega$, on the space $\mathcal{C}(\Omega)$ of $\K$-valued continuous functions on a topological Hausdorff space $\Omega$. 
In addition, we write $\tau_{\operatorname{p}}$ for the \emph{topology of pointwise convergence} on the space $\K^{\Omega}$ of $\K$-valued functions on a set $\Omega$. By a slight abuse of notation we also use the symbols $\tau_{\operatorname{co}}$ and 
$\tau_{\operatorname{p}}$ for the relative compact-open topology and the relative topology of pointwise convergence on topological subspaces of $\mathcal{C}(\Omega)$ and $\K^{\Omega}$, respectively. 

\begin{defn}[{\cite[Definition 2.1]{gardella2020}}]\label{defn:isom_predual_equivalent}
Let $X$ be a normed space. 
\begin{enumerate}
\item[(a)] We call $X$ a \emph{(isometric) dual Banach space} if there are a Banach space $Y$ and a topological 
(isometric) isomorphism $\varphi\colon X\to Y^{\ast}$. The tuple $(Y,\varphi)$ is called a \emph{(isometric) Banach predual} of $X$. 
\item[(b)] Let $X$ be a (isometric) dual Banach space. We say that two (isometric) Banach preduals $(Y,\varphi)$ and $(Z,\psi)$ of $X$ are \emph{(isometrically) equivalent} if there is a topological (isometric) isomorphism 
$\lambda\colon Y\to Z$ such that $\lambda^{t}=\varphi\circ\psi^{-1}$. 
\end{enumerate} 
\end{defn}

\prettyref{defn:isom_predual_equivalent} (a) is already given in e.g.~\cite[p.~321]{brown1975} 
in the isomorphic setting and in \cite[p.~132]{godefroy1989} in isometric setting, too. 
We note that a normed space that is a dual Banach space is already a Banach space itself. 
Due to \cite[Theorem, p.~487]{davis1973} there are dual Banach spaces which are not isometric dual Banach spaces. 
Namely, if $(X,\|\cdot\|)$ is a non-reflexive isometric dual Banach space, 
e.g.~the space $(\ell^{\infty},\|\cdot\|_{\infty})$ of bounded sequences, then there 
exists an equivalent norm $\vertiii{\cdot}$ on $X$ such that $(X,\vertiii{\cdot})$ is not an isometric dual Banach space.

\begin{rem}\label{rem:dual_isom_map_isom}
Let $Y$ and $Z$ be Banach spaces and $\lambda\colon Y\to Z$ a topological isomorphism. 
If the dual map $\lambda^{t}\colon Z^{\ast}\to Y^{\ast}$ is an isometry, then $\lambda$ is also an isometry. 
Indeed, since $\lambda\colon Y\to Z$ is a topological isomorphism and $\lambda^{t}$ an isometry, we obtain that 
$\lambda^{t}$ is an isometric isomorphism, implying $\lambda^{t}(B_{\|\cdot\|_{Z^{\ast}}})=B_{\|\cdot\|_{Y^{\ast}}}$ and
\[
 \|\lambda(y)\|_{Z}
=\sup_{z^{\ast}\in B_{\|\cdot\|_{Z^{\ast}}}}|z^{\ast}(\lambda(y))|
=\sup_{z^{\ast}\in B_{\|\cdot\|_{Z^{\ast}}}}|\lambda^{t}(z^{\ast})(y)|
=\sup_{y^{\ast}\in B_{\|\cdot\|_{Y^{\ast}}}}|y^{\ast}(y)|\\
=\|y\|_{Y}
\]
for all $y\in Y$. Hence $\lambda$ is an isometry. 
\end{rem}

Due to the preceding remark we may equivalently replace the isometric isomorphism $\lambda\colon Y\to Z$ in 
\prettyref{defn:isom_predual_equivalent}(b) by a topological isomorphism because it is 
automatically isometric if $(Y,\varphi)$ and $(Z,\psi)$ are isometric Banach preduals of $X$ 
(cf.~\cite[Remark 2.4]{gardella2020}).

\begin{prop}\label{prop:equiv_implies_isom_equiv}
Let $X$ be an isometric dual Banach space with isometric Banach preduals $(Y,\varphi)$ and $(Z,\psi)$. 
Then the following assertions are equivalent.
\begin{enumerate}
\item[(a)] $(Y,\varphi)$ and $(Z,\psi)$ are equivalent.  
\item[(b)] $(Y,\varphi)$ and $(Z,\psi)$ are isometrically equivalent.  
\end{enumerate} 
\end{prop}

In addition, a (isometric) Banach predual $(Y,\varphi)$ of a (isometric) dual Banach space $X$ may be considered 
as a closed subspace of $X^{\ast}$.

\begin{rem}\label{rem:isom_predual_into_dual}
Let $X$ be a (isometric) dual Banach space with (isometric) Banach predual $(Y,\varphi)$. Then 
$\Phi_{\varphi}\colon Y\to X^{\ast}$, $y\mapsto [x\mapsto \varphi(x)(y)]$, 
is an injective continuous (isometric) linear map (see e.g.~\cite[Proposition 2.2, p.~1593]{kruse2024}).
\end{rem}

Next, we want to study whether a (isometric) dual Banach space has a \emph{unique} (isometric) Banach predual by identification via 
topological (isometric) isomorphisms.

\begin{defn}\label{defn:isom_unique_predual}
Let $X$ be a (isometric) dual Banach space.  
\begin{enumerate}
\item[(a)] We say that $X$ has a \emph{unique (isometric) Banach predual} if for all (isometric) Banach preduals $(Y,\varphi)$ and $(Z,\psi)$ of $X$ there is a topological (isometric) isomorphism $\lambda\colon Y\to Z$. 
\item[(b)] We say that $X$ has a \emph{strongly unique (isometric) Banach predual} if for all (isometric) Banach preduals 
$(Y,\varphi)$ and $(Z,\psi)$ of $X$ and all topological (isometric) isomorphisms $\alpha\colon Z^{\ast}\to Y^{\ast}$ 
there is a topological (isometric) isomorphism $\lambda\colon Y\to Z$ such that $\lambda^{t}=\alpha$. 
\end{enumerate} 
\end{defn}

\prettyref{defn:isom_unique_predual} (a) is already given in e.g.~\cite[p.~321]{brown1975} 
in the isomorphic setting and in \cite[p.~132]{godefroy1989} in the isometric setting.
\prettyref{defn:isom_unique_predual} (b) is given in an equivalent form in e.g.~\cite[p.~134]{godefroy1989} and \cite[p.~469]{weaver2018a} in the isometric setting. The next two propositions follow directly from the proofs of 
\cite[2.6, 2.7 Propositions, p.~958]{kruse2025}.

\begin{prop}\label{prop:isom_unique_equivalent}
Let $X$ be a (isometric) dual Banach space. Then the following assertions are equivalent.
\begin{enumerate}
\item[(a)] $X$ has a unique (isometric) Banach predual. 
\item[(b)] For all (isometric) Banach preduals $(Y,\varphi)$ and $(Z,\psi)$ of $X$ there 
is a topological (isometric) isomorphism $\mu\colon X\to Z^{\ast}$ such that $(Y,\varphi)$ and $(Z,\mu)$ are 
(isometrically) equivalent. 
\end{enumerate}
\end{prop}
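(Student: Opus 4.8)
The plan is to observe that the notion of equivalence from \prettyref{defn:isom_predual_equivalent}(b) already encodes exactly the datum demanded for uniqueness, namely a topological (isometric) isomorphism $\lambda\colon Y\to Z$. The additional requirement $\lambda^{t}=\varphi\circ\psi^{-1}$ only pins down \emph{which} predual structure on $Z$ is paired with $(Y,\varphi)$; it places no extra obstruction on the mere existence of such a $\lambda$. With this observation both implications reduce to short manipulations of dual maps, and no analytic input beyond the behaviour of transposes under composition and inversion is required.

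For the implication (b) $\Rightarrow$ (a) I would take arbitrary (isometric) Banach preduals $(Y,\varphi)$ and $(Z,\psi)$ of $X$ and apply (b) to obtain a topological (isometric) isomorphism $\mu\colon X\to Z^{\ast}$ together with a witnessing topological (isometric) isomorphism $\lambda\colon Y\to Z$ for the (isometric) equivalence of $(Y,\varphi)$ and $(Z,\mu)$. This $\lambda$ is already a topological (isometric) isomorphism $Y\to Z$, which is precisely what \prettyref{defn:isom_unique_predual}(a) asks for; the specific value of $\mu$ and the identity $\lambda^{t}=\varphi\circ\mu^{-1}$ play no further role here.

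For the implication (a) $\Rightarrow$ (b) I would again start from arbitrary preduals $(Y,\varphi)$ and $(Z,\psi)$ and use uniqueness to produce a topological (isometric) isomorphism $\lambda\colon Y\to Z$. The remaining task is to construct the predual structure $\mu$ that turns $\lambda$ into an equivalence. Since $\lambda$ is a topological (isometric) isomorphism, its transpose $\lambda^{t}\colon Z^{\ast}\to Y^{\ast}$ is again one, and hence so is $(\lambda^{t})^{-1}\colon Y^{\ast}\to Z^{\ast}$. I would therefore set
\[
\mu\coloneqq(\lambda^{t})^{-1}\circ\varphi\colon X\to Z^{\ast},
\]
a composition of topological (isometric) isomorphisms, so that $(Z,\mu)$ is a genuine (isometric) Banach predual of $X$. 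A direct computation then yields $\mu^{-1}=\varphi^{-1}\circ\lambda^{t}$ and hence $\varphi\circ\mu^{-1}=\lambda^{t}$, which is exactly the relation $\lambda^{t}=\varphi\circ\mu^{-1}$ certifying that $(Y,\varphi)$ and $(Z,\mu)$ are (isometrically) equivalent via $\lambda$.

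The argument carries essentially no analytic content, and the only point demanding a little care is the parenthetical \emph{isometric} case, where I must ensure that passing to transposes and inverses preserves isometry. This is where the standard fact that the transpose of an isometric isomorphism of Banach spaces is again an isometric isomorphism enters (its converse being recorded in \prettyref{rem:dual_isom_map_isom}), guaranteeing that $(\lambda^{t})^{-1}\circ\varphi$ is isometric whenever $\lambda$ and $\varphi$ are. Keeping the topological and isometric formulations in lockstep throughout the two implications is the only genuine bookkeeping obstacle.
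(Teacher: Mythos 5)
Your proof is correct and follows essentially the same route as the paper, which simply defers to the proofs of \cite[2.6, 2.7 Propositions, p.~5]{kruse2023a}: the implication (b)$\Rightarrow$(a) is immediate from \prettyref{defn:isom_unique_predual}(a), and for (a)$\Rightarrow$(b) the canonical choice $\mu\coloneqq(\lambda^{t})^{-1}\circ\varphi$ is exactly the intended construction. Your handling of the isometric case is also right: the transpose of an isometric isomorphism is again one (the converse being \prettyref{rem:dual_isom_map_isom}), so $\mu$ is isometric whenever $\lambda$ and $\varphi$ are.
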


\begin{prop}\label{prop:isom_strongly_unique_equivalent}
Let $X$ be a (isometric) dual Banach space. Then the following assertions are equivalent.
\begin{enumerate}
\item[(a)] $X$ has a strongly unique (isometric) Banach predual. 
\item[(b)] All (isometric) Banach preduals of $X$ are (isometrically) equivalent.
\end{enumerate}
\end{prop}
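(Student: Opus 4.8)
The plan is to prove the equivalence in \prettyref{prop:isom_strongly_unique_equivalent} by unwinding the definitions and exploiting \prettyref{prop:equiv_implies_isom_equiv}, which lets me collapse the isometric and isomorphic notions of equivalence into one. I will handle both the isometric and the isomorphic case simultaneously, using the parenthetical phrasing, and I expect the isometric case to be the representative one since \prettyref{prop:equiv_implies_isom_equiv} shows the two equivalence notions coincide for isometric preduals.

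For the direction (a)$\Rightarrow$(b), assume $X$ has a strongly unique (isometric) Banach predual and let $(Y,\varphi)$ and $(Z,\psi)$ be two (isometric) Banach preduals. I would produce a topological (isometric) isomorphism $\alpha\colon Z^{\ast}\to Y^{\ast}$ to which I can apply the strong uniqueness hypothesis; the natural candidate is $\alpha\coloneqq\varphi\circ\psi^{-1}$, which is a composition of topological (isometric) isomorphisms $Z^{\ast}\xrightarrow{\psi^{-1}}X\xrightarrow{\varphi}Y^{\ast}$ and hence itself a topological (isometric) isomorphism. Strong uniqueness then yields a topological (isometric) isomorphism $\lambda\colon Y\to Z$ with $\lambda^{t}=\alpha=\varphi\circ\psi^{-1}$, which is exactly the condition in \prettyref{defn:isom_predual_equivalent}(b) for $(Y,\varphi)$ and $(Z,\psi)$ to be (isometrically) equivalent. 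This shows all (isometric) Banach preduals are (isometrically) equivalent.

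For (b)$\Rightarrow$(a), assume all (isometric) Banach preduals of $X$ are (isometrically) equivalent, and let $(Y,\varphi)$, $(Z,\psi)$ be (isometric) Banach preduals together with a topological (isometric) isomorphism $\alpha\colon Z^{\ast}\to Y^{\ast}$. The key observation is that $(Z,\alpha^{-1}\circ\varphi)$ is again a (isometric) Banach predual of $X$, since $\alpha^{-1}\circ\varphi\colon X\to Z^{\ast}$ is a composition of topological (isometric) isomorphisms. Applying the hypothesis to the pair $(Y,\varphi)$ and $(Z,\alpha^{-1}\circ\varphi)$ gives a topological (isometric) isomorphism $\lambda\colon Y\to Z$ with $\lambda^{t}=\varphi\circ(\alpha^{-1}\circ\varphi)^{-1}=\varphi\circ\varphi^{-1}\circ\alpha=\alpha$. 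Thus $\lambda$ witnesses the strong uniqueness condition for the original data $(Y,\varphi)$, $(Z,\psi)$, $\alpha$.

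The main subtlety, rather than any deep obstacle, is bookkeeping in the isometric case: I must confirm that the constructed maps $\varphi\circ\psi^{-1}$ and $\alpha^{-1}\circ\varphi$ are isometric isomorphisms and not merely topological ones, but this is immediate because a composition of isometric isomorphisms is isometric and inverses of isometric isomorphisms are isometric. I should also remark that in the isometric case \prettyref{prop:equiv_implies_isom_equiv} guarantees that the notion of ``equivalent'' appearing implicitly in the definition of strong uniqueness matches ``isometrically equivalent'', so no mismatch arises between the two flavours of equivalence. Since the statement is asserted to follow directly from the proof of \cite[2.7 Proposition, p.~5]{kruse2023a}, I anticipate the argument is essentially this formal manipulation and I would keep the write-up short.
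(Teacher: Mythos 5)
Your proof is correct and follows essentially the same route as the paper's (which defers to the proof of \cite[2.7 Proposition, p.~5]{kruse2023a}): for (a)$\Rightarrow$(b) apply strong uniqueness to $\alpha\coloneqq\varphi\circ\psi^{-1}$, and for (b)$\Rightarrow$(a) apply the equivalence hypothesis to the auxiliary predual $(Z,\alpha^{-1}\circ\varphi)$, with the parenthetical isometric and isomorphic cases handled uniformly. The bookkeeping you note (compositions and inverses of isometric isomorphisms are isometric) is exactly what makes the argument go through, so nothing is missing.
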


\prettyref{prop:isom_strongly_unique_equivalent} (b) is used in \cite[Definition 2.7]{gardella2020} 
to give an equivalent definition of a (isometric) dual Banach space 
having a strongly unique (isometric) Banach predual.

\begin{rem}\label{rem:unique_predual_implies_unique_isom}
Let $X$ be an isometric dual Banach space. If $X$ has a (strongly) unique Banach predual, then $X$ has a (strongly) unique 
isometric Banach predual. Indeed, this follows from \prettyref{prop:equiv_implies_isom_equiv}, 
\prettyref{prop:isom_unique_equivalent} and \prettyref{prop:isom_strongly_unique_equivalent}.
\end{rem}

The converse of \prettyref{rem:unique_predual_implies_unique_isom} does not hold by 
\cite[Example 6.7]{gardella2020} where it is shown that the Banach space $L(\ell^{2})$ of continuous linear operators from 
$\ell^{2}$ to $\ell^{2}$ does not have a strongly unique Banach predual even though it has a strongly unique isometric Banach predual. 
Here, $\ell^{2}$ denotes the Banach space of absolutely square summable sequences with its usual norm. 
Now, let us introduce a special (isometric) Banach predual of a (isometric) dual Banach space of scalar-valued functions, 
whose general definition goes back to \cite[p.~181, 184, 187]{jaramillo2009} and \cite[p.~683]{carando2004} in the non-isometric setting 
(see \cite[Definition 2.3, p.~1593]{kruse2024} and \cite[Proposition 2.6, p.~1595]{kruse2024}). 

\begin{defn}\label{defn:isom_linearisation}
Let $\F$ be a Banach space of $\K$-valued functions on a non-empty set $\Omega$. 
\begin{enumerate}
\item[(a)] We call a triple $(\delta,Y,T)$ of a completely normable locally convex Hausdorff space $Y$ over the field $\K$, 
a map $\delta\colon\Omega\to Y$ and a topological isomorphism $T\colon\F\to Y'$ a 
\emph{strong Banach linearisation of} $\F$ if $T(f)\circ \delta= f$ for all $f\in\F$. 
\item[(b)] We call a triple $(\delta,Y,T)$ of a Banach space $Y$ over the field $\K$, 
a map $\delta\colon\Omega\to Y$ and an isometric isomorphism $T\colon\F\to Y^{\ast}$ a 
\emph{strong isometric Banach linearisation of} $\F$ if $T(f)\circ \delta= f$ for all $f\in\F$. 
\item[(c)] We say that $\F$ \emph{admits a strong (isometric) Banach linearisation} 
if there exists a strong (isometric) Banach linearisation $(\delta,Y,T)$ of $\F$.
\end{enumerate}
\end{defn}

Clearly, if $(\delta,Y,T)$ is a strong Banach linearisation of $\F$, then there is a norm $\|\cdot\|_{Y}$ on $Y$ such that 
$((Y,\|\cdot\|_{Y}), T)$ is a Banach predual of $\F$. Further, $(Y,T)$ is an isometric Banach predual of $\F$ if $(\delta,Y,T)$ 
is a strong isometric Banach linearisation of $\F$. However, there are (isometric) dual Banach spaces which do not admit a 
strong (isometric) Banach linearisation (cf.~\cite[p.~190]{jaramillo2009}). 
For instance, let $K$ be an infinite compact Hausdorff space. Then the Banach space 
$(\mathcal{C}(K),\|\cdot\|_{\infty})$ of $\R$-valued continuous functions on $K$ does not admit a strong Banach linearisation by 
\cite[Corollary 2.4, p.~190]{jaramillo2009}, in particular no strong isometric Banach linearisation. 
If $K$ is also hyperstonean, then $\mathcal{C}(K)$ is an isometric dual Banach space with strongly unique isometric Banach predual
by \cite[Corollaire, p.~171]{dixmier1955}, \cite[Th\'{e}or\`{e}me 2, p.~554--555]{grothendieck1955} and 
\cite[1.13.3 Corollary, p.~30]{sakai1998} (cf.~\cite[Theorem 6.4.2, p.~200]{dales2016}).

\begin{exa}\label{ex:isom_linearisation_of_l1}
Let $(\ell^{1},\|\cdot\|_{1})$ denote the Banach space of complex absolutely summable sequences on $\N$, 
$(c_{0},\|\cdot\|_{\infty})$ the Banach space of complex zero sequences 
and $(c,\|\cdot\|_{\infty})$ the Banach space of complex convergent sequences, all three equipped with their usual norms. 

(i) We define the isometric isomorphism 
\[
\varphi_{0}\colon (\ell^{1},\|\cdot\|_{1})\to (c_{0},\|\cdot\|_{\infty})^{\ast},\;\varphi_{0}(x)(y)\coloneqq \sum_{k=1}^{\infty}x_{k}y_{k},
\]
and $\delta\colon\N\to c_{0}$, $\delta(n)\coloneqq e_{n}$, where $e_{n}$ denotes the $n$-th unit sequence. Then $(\delta,c_{0},\varphi_{0})$ is a strong isometric Banach linearisation of $\ell^{1}$ 
by \cite[Example 2.4 (i), p.~1594]{kruse2024}. 

(ii) We define the topological, non-isometric isomorphism
\[
\psi\colon \;(\ell^{1},\|\cdot\|_{1})\to (c,\|\cdot\|_{\infty})^{\ast},\;\psi(x)(y)\coloneqq y_{\infty}x_{1}+\sum_{k=1}^{\infty}(y_{k}-y_{\infty})x_{k+1},
\]
where $y_{\infty}\coloneqq\lim_{n\to\infty}y_{n}$ for $y\in c$, and the map 
$\widetilde{\delta}\colon\N\to c$ given by $\widetilde{\delta}(1)\coloneqq (1,1,\ldots)$ and 
$\widetilde{\delta}(n)\coloneqq e_{n-1}$ for all $n\geq 2$.
Then $(\widetilde{\delta},c,\psi)$ is a strong non-isometric Banach linearisation of $\ell^{1}$ 
by \cite[Example 2.4 (iii), p.~1594]{kruse2024}. 
\end{exa}

Further, there is an isometric Banach predual $(c,\varphi_{c})$ of $\ell^{1}$ which cannot be augmented by a map 
$\delta\colon\N\to c$ to a strong isometric Banach linearisation of $\ell^{1}$ by \cite[Example 2.4 (ii), p.~1594]{kruse2024}. 
However, we will give a characterisation of those isometric Banach preduals for which this is possible 
in \prettyref{thm:isometric_sB_linearisation_in_equivalence_class}.
Moreover, we have the following relation between strong isometric Banach linearisations and (strongly) 
unique isometric Banach preduals. The corresponding results for strong Banach linearisations 
and (strongly) unique Banach preduals are already proved in 
\cite[Proposition 3.8, p.~961]{kruse2025} and \cite[Corollary 3.9, p.~962]{kruse2025}. 

\begin{prop}\label{prop:strongly_unique_isom_predual}
Let $\F$ be a Banach space of $\K$-valued functions on a non-empty set $\Omega$ and $(\delta,Y,T)$ a strong 
isometric Banach linearisation of $\F$. Consider the following assertions.
\begin{enumerate}
\item[(a)] $\F$ has a strongly unique isometric Banach predual. 
\item[(b)] For every isometric Banach predual $(Z,\varphi)$ of $\F$ and every $x\in\Omega$ there is a (unique) $z_{x}\in Z$ with 
$T(\cdot)(\delta(x))=\varphi(\cdot)(z_{x})$.
\item[(c)] For every isometric Banach predual $(Z,\varphi)$ of $\F$ there is a (unique) map 
$\widetilde{\delta}\colon\Omega\to Z$ such that $(\widetilde{\delta},Z,\varphi)$ is a strong isometric Banach linearisation of $\F$.
\end{enumerate}
Then it holds that (a)$\Rightarrow$(b)$\Leftrightarrow$(c). If the family $(\delta(x))_{x\in\Omega}$ is 
linearly independent, then it holds that (b)$\Rightarrow$(a). 
\end{prop}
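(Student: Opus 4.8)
The plan is to reduce everything to \prettyref{prop:isom_strongly_unique_equivalent}, which recasts ``$\F$ has a strongly unique isometric Banach predual'' as ``all isometric Banach preduals of $\F$ are isometrically equivalent''. The equivalence (b)$\Leftrightarrow$(c) is then purely formal. Since $(\delta,Y,T)$ is a strong isometric Banach linearisation, $T(f)\circ\delta=f$ gives $T(f)(\delta(x))=f(x)$, so (b) asserts that for every isometric Banach predual $(Z,\varphi)$ and every $x\in\Omega$ there is $z_{x}\in Z$ with $\varphi(f)(z_{x})=f(x)$ for all $f\in\F$; this is exactly the statement that $\widetilde{\delta}(x)\coloneqq z_{x}$ defines a map with $\varphi(f)\circ\widetilde{\delta}=f$, i.e.\ a strong isometric Banach linearisation $(\widetilde{\delta},Z,\varphi)$, which is (c). The uniqueness of $z_{x}$ (and hence of $\widetilde{\delta}$) is free: the functionals $\varphi(f)$, $f\in\F$, exhaust $Z^{\ast}$ because $\varphi$ is onto, and therefore separate the points of $Z$. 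I would dispatch (b)$\Leftrightarrow$(c) in this one paragraph.

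For (a)$\Rightarrow$(b) I fix an isometric Banach predual $(Z,\varphi)$ and use that $(Y,T)$ is itself an isometric Banach predual of $\F$. By \prettyref{prop:isom_strongly_unique_equivalent} the preduals $(Y,T)$ and $(Z,\varphi)$ are isometrically equivalent, so there is an isometric isomorphism $\lambda\colon Y\to Z$ with $\lambda^{t}=T\circ\varphi^{-1}$, that is $\varphi(f)(\lambda(y))=T(f)(y)$ for all $f\in\F$ and $y\in Y$. Putting $z_{x}\coloneqq\lambda(\delta(x))$ and evaluating at $y=\delta(x)$ gives $\varphi(f)(z_{x})=T(f)(\delta(x))$ for every $f$, which is precisely (b).

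The heart of the matter is (b)$\Rightarrow$(a) under the linear independence hypothesis, and here I would construct the isometric equivalence by hand. Let $(Z,\varphi)$ be an arbitrary isometric Banach predual; by (b)$\Leftrightarrow$(c) there is a linearisation $(\widetilde{\delta},Z,\varphi)$. On the linear span $E\coloneqq\operatorname{span}\{\delta(x):x\in\Omega\}$ I define $\lambda$ by $\lambda(\delta(x))\coloneqq\widetilde{\delta}(x)$ and linear extension; this is where linear independence of $(\delta(x))_{x\in\Omega}$ enters, since it makes $\{\delta(x)\}$ a basis of $E$ and the definition unambiguous. The decisive computation is that for $y=\sum_{i}c_{i}\delta(x_{i})\in E$ one has $T(f)(y)=\sum_{i}c_{i}f(x_{i})=\varphi(f)(\lambda(y))$ for every $f\in\F$; taking suprema over $f$ in the closed unit ball of $\F$ and using that $T$ and $\varphi$ are isometric isomorphisms onto the respective dual spaces, this gives $\|y\|_{Y}=\|\lambda(y)\|_{Z}$, so $\lambda$ is a linear isometry on $E$.

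It then remains to extend $\lambda$ to an isometric isomorphism $Y\to Z$ and to verify $\lambda^{t}=T\circ\varphi^{-1}$. Density of $E$ in $Y$ is a Hahn--Banach argument: if $y^{\ast}\in Y^{\ast}$ vanishes on $E$, then $y^{\ast}=T(f)$ for some $f$ with $f(x)=T(f)(\delta(x))=0$ for all $x$, so $f=0$ in $\F$ and $y^{\ast}=0$; the same argument with $\varphi$ shows $\operatorname{span}\{\widetilde{\delta}(x)\}$ is dense in $Z$. Hence $\lambda$ extends to an isometry $Y\to Z$ whose range is complete, thus closed, and dense, thus all of $Z$, so $\lambda$ is an isometric isomorphism. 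The identity $\varphi(f)(\lambda(y))=T(f)(y)$ extends from $E$ to $Y$ by continuity and says $\lambda^{t}(\varphi(f))=T(f)$, i.e.\ $\lambda^{t}=T\circ\varphi^{-1}$; thus $(Y,T)$ and $(Z,\varphi)$ are isometrically equivalent. As $(Z,\varphi)$ was arbitrary and isometric equivalence is readily seen to be an equivalence relation, all isometric Banach preduals of $\F$ are isometrically equivalent to $(Y,T)$, and \prettyref{prop:isom_strongly_unique_equivalent} gives (a). The step I expect to be most delicate is the well-definedness and isometry of $\lambda$ on $E$: linear independence makes well-definedness immediate, while the pointwise identity $T(f)(y)=\varphi(f)(\lambda(y))$ is what forces $\lambda$ to be an \emph{isometry} and to satisfy $\lambda^{t}=T\circ\varphi^{-1}$ exactly, not merely up to norm.
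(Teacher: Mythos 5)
Your proof is correct, and in substance it is the very argument the paper leans on: the paper's own proof is essentially a citation to the non-isometric version in \cite[2.19 Proposition]{kruse2023a}, whose construction --- define $\lambda$ on $\operatorname{span}\{\delta(x)\;|\;x\in\Omega\}$ by $\delta(x)\mapsto\widetilde{\delta}(x)$ (well-defined by linear independence), extend by density of that span (obtained from surjectivity of $T$ resp.\ $\varphi$ via Hahn--Banach, exactly as you argue), and verify $\lambda^{t}=T\circ\varphi^{-1}$ --- is what you carry out explicitly, followed by the same reduction of strong uniqueness to \prettyref{prop:isom_strongly_unique_equivalent}. The one place where you genuinely diverge is how the isometry of $\lambda$ is obtained: the paper gets it abstractly from \prettyref{prop:equiv_implies_isom_equiv} (ultimately \prettyref{rem:dual_isom_map_isom}: a topological isomorphism whose dual map is an isometry is itself an isometry), since the constructed $\lambda$ satisfies $\lambda^{t}=T\circ\varphi^{-1}$, a composition of isometric isomorphisms; you instead prove it directly on the span via
\[
\|y\|_{Y}=\sup_{f\in B_{\|\cdot\|}}|T(f)(y)|=\sup_{f\in B_{\|\cdot\|}}|\varphi(f)(\lambda(y))|=\|\lambda(y)\|_{Z}.
\]
Your variant is self-contained and slightly cleaner in that $\lambda$ is isometric from the outset, so the density extension needs no separate continuity or open-mapping considerations; the paper's variant is shorter given the companion paper and exhibits the isometric statement as a formal corollary of the isomorphic one. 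Both are valid, and your handling of the remaining items --- (b)$\Leftrightarrow$(c) with uniqueness of $z_{x}$ from $Z^{\ast}$ separating points, and (a)$\Rightarrow$(b) by specialising the isometric equivalence of $(Y,T)$ and $(Z,\varphi)$ at $y=\delta(x)$ --- matches the intended arguments.
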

\begin{proof}
This statement follows directly from the proof of \cite[Proposition 3.8, p.~961]{kruse2025}
with \cite[Proposition 2.7, p.~958]{kruse2025} replaced by \prettyref{prop:isom_strongly_unique_equivalent}. 
We only note that the topological isomorphism $\lambda\colon Y\to Z$ for another Banach predual $(Z,\varphi)$ of 
$\F$ constructed in the proof of the implication (b)$\Rightarrow$(a) of \cite[Proposition 3.8, p.~961]{kruse2025}, 
if the family $(\delta(x))_{x\in\Omega}$ is linearly independent, is already an isometric isomorphism 
if $T$ and $\varphi$ are isometric isomorphisms by \prettyref{prop:equiv_implies_isom_equiv} since it fulfils $\lambda^{t}=T\circ\varphi^{-1}$. 
\end{proof}

Regarding assertion (b) of \prettyref{prop:strongly_unique_isom_predual}, we note that $T(\cdot)(\delta(x))=\delta_{x}$ 
where $\delta_{x}$ is the point evaluation functional given by $\delta_{x}\colon\F\to\K$, $\delta_{x}(f)\coloneqq f(x)$, 
for $x\in\Omega$. Further, we recall the following definition. For a dual Banach space $X$ with Banach predual $(Y,\varphi)$ 
we define the system of seminorms 
\[
p_{N}(x)\coloneqq\sup_{y\in N}|\varphi(x)(y)|,\quad x\in X,
\]
for finite sets $N\subset Y$, which induces a locally convex Hausdorff topology on $X$ 
w.r.t.~the dual paring $\langle X,Y,\varphi\rangle$ and we denote this topology by 
$\sigma_{\varphi}(X,Y)$. With this definition at hand we make the following useful observation, 
which follows from the Banach--Dieudonn\'e theorem and \cite[Chap.~IV, \S1, 1.2, p.~124]{schaefer1971}.

\begin{rem}\label{rem:weakly_continuous}
Let $(X,\|\cdot\|)$ be an isometric dual Banach space over the scalar field $\K$ with isometric Banach predual $(Z,\varphi)$ 
and $x'\colon X\to \K$ a linear map. Then the following assertions are equivalent. 
\begin{enumerate}
\item[(a)] $\ker (x')\cap B_{\|\cdot\|}$ is $\sigma_{\varphi}(X,Z)$-closed.
\item[(b)] $x'$ is $\sigma_{\varphi}(X,Z)$-continuous.
\item[(c)] There is a (unique) $z\in Z$ such that $x'=\varphi(\cdot)(z)$.
\end{enumerate}
\end{rem}

Thus assertion (b) of \prettyref{prop:strongly_unique_isom_predual} is fulfilled if and only for every isometric Banach predual 
$(Z,\varphi)$ of $\F$ and every $x\in\Omega$ it holds that $T(\cdot)(\delta(x))\in(\F,\sigma_{\varphi}(\F,Z))'$. 
In the phrasing of \cite[p.~412]{davidson2011} this means that $T(\cdot)(\delta(x))$ is \emph{universally weak-$\ast$ continuous} 
for every $x\in\Omega$. Analogous to \prettyref{prop:strongly_unique_isom_predual} we have a counterpart of \cite[Corollary 3.9, p.~962]{kruse2025} in the isometric setting as well. 

\begin{cor}\label{cor:unique_isom_predual}
Let $\F$ be a Banach space of $\K$-valued functions on a non-empty set $\Omega$ and $(\delta,Y,T)$ a strong 
isometric Banach linearisation of $\F$. Consider the following assertions.
\begin{enumerate}
\item[(a)] $\F$ has a unique isometric Banach predual. 
\item[(b)] For every isometric Banach predual $(Z,\varphi)$ of $\F$ there is an isometric isomorphism 
$\psi\colon\F\to Z^{\ast}$ such that for every $x\in\Omega$ there is a (unique) $z_{x}\in Z$ with 
$T(\cdot)(\delta(x))=\psi(\cdot)(z_{x})$.
\item[(c)] For every isometric Banach predual $(Z,\varphi)$ of $\F$ there is an isometric isomorphism 
$\psi\colon\F\to Z^{\ast}$ and a (unique) map $\widetilde{\delta}\colon\Omega\to Z$ such that $(\widetilde{\delta},Z,\psi)$ 
is a strong isometric Banach linearisation of $\F$.
\end{enumerate}
Then it holds that (a)$\Rightarrow$(b)$\Leftrightarrow$(c). If the family $(\delta(x))_{x\in\Omega}$ is linearly independent, 
then it holds that (b)$\Rightarrow$(a). 
\end{cor}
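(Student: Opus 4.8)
The plan is to imitate the proof scheme of \prettyref{prop:strongly_unique_isom_predual}, but with \prettyref{prop:isom_strongly_unique_equivalent} replaced by \prettyref{prop:isom_unique_equivalent}; this is precisely the isometric analogue of how \cite[2.20 Corollary, p.~12]{kruse2023a} is deduced from \cite[2.19 Proposition, p.~10--11]{kruse2023a}. Throughout I use that, since $(\delta,Y,T)$ is a strong isometric Banach linearisation, one has $T(f)(\delta(x))=f(x)$ for all $f\in\F$ and $x\in\Omega$, i.e.\ $T(\cdot)(\delta(x))=\delta_{x}$, that $(Y,T)$ is itself an isometric Banach predual of $\F$, and that an isometric isomorphism onto a dual space carries $B_{\|\cdot\|}$ onto the unit ball of that dual.

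First I would settle (b)$\Leftrightarrow$(c), which is a mere reformulation. Given $\psi$ as in (b), put $\widetilde{\delta}(x)\coloneqq z_{x}$; then $\psi(f)(\widetilde{\delta}(x))=\psi(f)(z_{x})=T(f)(\delta(x))=f(x)$ exhibits $(\widetilde{\delta},Z,\psi)$ as a strong isometric Banach linearisation, and reading the same chain of equalities backwards recovers (b). The uniqueness of $z_{x}$, equivalently of $\widetilde{\delta}$, is automatic: the requirement $\psi(f)(z_{x})=f(x)$ for all $f$ prescribes $z^{\ast}(z_{x})$ for every $z^{\ast}\in Z^{\ast}$ because $\psi$ is onto $Z^{\ast}$, and $Z^{\ast}$ separates the points of $Z$.

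For (a)$\Rightarrow$(b) I would apply \prettyref{prop:isom_unique_equivalent} to the isometric Banach preduals $(Y,T)$ and $(Z,\varphi)$ of $\F$. Uniqueness yields an isometric isomorphism $\psi\colon\F\to Z^{\ast}$ together with an isometric isomorphism $\lambda\colon Y\to Z$ satisfying $\lambda^{t}=T\circ\psi^{-1}$. Unravelling the dual map gives $\psi(f)(\lambda(y))=T(f)(y)$ for all $f\in\F$ and $y\in Y$; evaluating at $y=\delta(x)$ and setting $z_{x}\coloneqq\lambda(\delta(x))$ yields $T(\cdot)(\delta(x))=\psi(\cdot)(z_{x})$, which is (b).

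The substantial implication is (b)$\Rightarrow$(a) under the hypothesis that $(\delta(x))_{x\in\Omega}$ is linearly independent. Fix an arbitrary isometric Banach predual $(Z,\varphi)$ and take $\psi$ and $z_{x}=\widetilde{\delta}(x)$ from (b)/(c). Linear independence lets me define a linear map $\lambda_{0}$ on $\operatorname{span}\{\delta(x)\mid x\in\Omega\}$ by $\lambda_{0}(\delta(x))\coloneqq z_{x}$. For $y=\sum_{j}c_{j}\delta(x_{j})$ I would compute both norms through the unit balls of the duals: since $T$ and $\psi$ are isometric isomorphisms, $\|y\|_{Y}=\sup_{f\in B_{\|\cdot\|}}|\sum_{j}c_{j}f(x_{j})|$ and, using $\psi(f)(z_{x_{j}})=T(f)(\delta(x_{j}))=f(x_{j})$, also $\|\lambda_{0}(y)\|_{Z}=\sup_{f\in B_{\|\cdot\|}}|\sum_{j}c_{j}f(x_{j})|$, so that $\lambda_{0}$ is isometric. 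The linearisation property makes $\operatorname{span}\{\delta(x)\mid x\in\Omega\}$ dense in $Y$ (any $T(f)$ annihilating it forces $f\equiv 0$, hence $T(f)=0$), so $\lambda_{0}$ extends to an isometry $\lambda\colon Y\to Z$; the same density argument applied to $(\widetilde{\delta},Z,\psi)$ shows $\operatorname{span}\{z_{x}\mid x\in\Omega\}$ is dense in $Z$, whence $\lambda$ is onto. Thus every isometric Banach predual of $\F$ is isometrically isomorphic to $(Y,T)$, and composing such isomorphisms makes any two isometric Banach preduals isometrically isomorphic, which is (a). The main obstacle is exactly the well-definedness of $\lambda_{0}$: without linear independence the rule $\delta(x)\mapsto z_{x}$ need not be consistent on the span, and this is why the extra hypothesis enters only for (b)$\Rightarrow$(a); the isometry computation and the two density arguments are then routine.
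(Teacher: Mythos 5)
Your proposal is correct and follows essentially the same route as the paper's: the paper obtains this corollary by adapting the proofs of \cite[2.19 Proposition, p.~10--11]{kruse2023a} and \cite[2.20 Corollary, p.~12]{kruse2023a} to the isometric setting with \prettyref{prop:isom_unique_equivalent} in place of the strong-uniqueness criterion, which is exactly the scheme you reconstruct (the reformulation (b)$\Leftrightarrow$(c), the use of \prettyref{prop:isom_unique_equivalent} for (a)$\Rightarrow$(b), and for (b)$\Rightarrow$(a) the map $\lambda_{0}$ on $\operatorname{span}\{\delta(x)\;|\;x\in\Omega\}$, well defined by linear independence, extended by density with surjectivity from density of $\operatorname{span}\{z_{x}\;|\;x\in\Omega\}$ in $Z$). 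The only cosmetic difference is that you verify that $\lambda$ is an isometry by a direct computation of norms through the unit balls, whereas the paper deduces it from $\lambda^{t}=T\circ\psi^{-1}$ via \prettyref{prop:equiv_implies_isom_equiv} (i.e.\ \prettyref{rem:dual_isom_map_isom}); both arguments are equally valid.
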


\section{Existence of isometric Banach preduals and prebiduals}
\label{sect:isom_existence}

In this section we give necessary and sufficient conditions for the existence of an isometric Banach predual of a normed space. 
We begin with the definition an isometric prebidual whose existence is a priori a stronger condition than the existence of an isometric 
Banach predual. For this definition we need the following notion. For a locally convex Hausdorff space $X$ we denote by $X_{b}'$ 
the dual space $X'$ equipped with the topology of uniform convergence on bounded subsets of $X$. 

\begin{defn}\label{defn:isom_prebidual}
Let $X$ be a normed space. We call $X$ an \emph{isometric bidual Banach space} if there are a locally convex Hausdorff space $Y$ 
such that $Y_{b}'$ is \emph{completely normable} by a norm $\vertiii{\cdot}$, i.e.~$(Y',\vertiii{\cdot})$ is a Banach space and 
the identity map $\id\colon Y_{b}'\to (Y',\vertiii{\cdot})$ is a topological isomorphism, and there is an isometric isomorphism 
$\varphi\colon X \to (Y',\vertiii{\cdot})^{\ast}$. The tuple $(Y,\varphi)$ is called an \emph{isometric prebidual} of $X$. 
\end{defn}

We note that if $X$ is an isometric bidual Banach space with isometric prebidual $(Y,\varphi)$, 
then $X$ is an isometric dual Banach space with isometric Banach predual $((Y',\vertiii{\cdot}),\varphi)$. 
Next, we make use of the mixed topology, \cite[Section 2.1]{wiweger1961}, and the notion of a Saks space, 
\cite[I.3.2 Definition, p.~27--28]{cooper1978}, to give a sufficient condition for the existence of an isometric prebidual.

\begin{defn}[{\cite[Definition 2.2, p.~3]{kruse_schwenninger2022}}]\label{defn:mixed_top_Saks}
Let $(X,\|\cdot\|)$ be a normed space and $\tau$ a Hausdorff locally convex topology on $X$ such that $\tau\leq\tau_{\|\cdot\|}$. 
Then
\begin{enumerate}
\item the \emph{mixed topology} $\gamma \coloneqq \gamma(\|\cdot\|,\tau)$ is
the finest linear topology on $X$ that coincides with $\tau$ on $\|\cdot\|$-bounded sets and such that 
$\tau\leq\gamma\leq\tau_{\|\cdot\|}$,
\item the triple $(X,\|\cdot\|,\tau)$ is called a \emph{pre-Saks space}. It is called a \emph{Saks space} 
if there exists a directed system of continuous seminorms $\Gamma_{\tau}$ that generates the topology $\tau$ such that   
\begin{equation}\label{eq:saks}
\|x\|=\sup_{q\in\Gamma_{\tau}} q(x), \quad x\in X.
\end{equation}
\end{enumerate}
\end{defn}

In comparison to \cite[Definition 2.2, p.~3]{kruse_schwenninger2022} we dropped the assumption that the 
space $(X,\|\cdot\|)$ should be complete and added the notion of a pre-Saks space in \prettyref{defn:mixed_top_Saks}.
However, we will see in \prettyref{rem:semireflexive_pre_Saks_already_Saks} (a) that $(X,\|\cdot\|)$ is complete if 
$(X,\|\cdot\|,\tau)$ is a Saks space and $(X,\gamma)$ semi-reflexive. 
The mixed topology $\gamma$ is actually Hausdorff locally convex and the definition given above is equivalent to the one 
introduced by Wiweger \cite[Section 2.1]{wiweger1961} due to \cite[Lemmas 2.2.1, 2.2.2, p.~51]{wiweger1961}. 
In the case that $(X,\|\cdot\|,\tau)$ is a Saks space, our definition of the mixed topology also coincides with the one of 
Cooper \cite[I.1.4 Definition, p.~6]{cooper1978} due to \cite[I.1.5 Proposition, p.~7]{cooper1978} and 
\cite[I.3.1 Lemma, p.~27]{cooper1978}.

\begin{defn}
Let $(X,\|\cdot\|,\tau)$ be a Saks space. 
\begin{enumerate}
\item We call $(X,\|\cdot\|,\tau)$ \emph{complete} if $(X,\gamma)$ is complete.
\item We call $(X,\|\cdot\|,\tau)$ \emph{semi-reflexive} if $(X,\gamma)$ is semi-reflexive.
\item We call $(X,\|\cdot\|,\tau)$ \emph{semi-Montel} if $(X,\gamma)$ is a semi-Montel space.
\end{enumerate}
\end{defn}

The notions of completeness and semi-reflexivity of Saks spaces were already introduced in 
\cite[I.3.2 Definition, p.~27--28]{cooper1978} and \cite[Definition 2.2]{kruse_seifert2022b}. 
The preceding notions may be characterised by topological properties of $B_{\|\cdot\|}=\{x\in X\;|\; \|x\|\leq 1\}$ w.r.t.~$\tau$.

\begin{rem}\label{rem:pre_Saks_unit_ball_char}
Let $(X,\|\cdot\|,\tau)$ be a pre-Saks space. 
\begin{enumerate}
\item $(X,\|\cdot\|,\tau)$ is a Saks space if and only if $B_{\|\cdot\|}$ is $\tau$-closed 
by \cite[I.3.1 Lemma, p.~27]{cooper1978}. 
\item $(X,\|\cdot\|,\tau)$ is a complete Saks space if and only if $B_{\|\cdot\|}$ is $\tau$-complete 
by \cite[I.1.14 Proposition, p.~11]{cooper1978} and part (a) combined with the observation a $\tau$-complete set is already 
$\tau$-closed.
\item $(X,\|\cdot\|,\tau)$ is a semi-reflexive Saks space if and only if $B_{\|\cdot\|}$ is $\sigma(X,(X,\tau)')$-compact 
by \cite[I.1.21 Corollary, p.~16]{cooper1978} and part (a) combined with the observation that a $\sigma(X,(X,\tau)')$-closed set 
is $\tau$-closed as $\sigma(X,(X,\tau)')\leq\tau$.
\item $(X,\|\cdot\|,\tau)$ is a semi-Montel Saks space if and only if $B_{\|\cdot\|}$ is $\tau$-compact 
by \cite[I.1.13 Proposition, p.~11]{cooper1978}. 
\end{enumerate}
\end{rem}

We note that \prettyref{rem:pre_Saks_unit_ball_char} (a) implies that if $(X,\|\cdot\|,\tau)$ is a Saks space, 
then $(X,\|\cdot\|)$ fulfils condition $\operatorname{(BBCl)}$ of \cite[Definition 3.1 (a), p.~1596--1597]{kruse2024} for $\tau$. 
Further, \prettyref{rem:pre_Saks_unit_ball_char} (d) implies that if $(X,\|\cdot\|,\tau)$ is a semi-Montel Saks space, 
then $(X,\|\cdot\|)$ fulfils condition $\operatorname{(BBC)}$ of \cite[Definition 3.1 (b), p.~1596--1597]{kruse2024} for $\tau$. 
A trivial example of a semi-reflexive Saks space which is not semi-Montel is given by $(X,\|\cdot\|,\tau_{\|\cdot\|})$ for 
a reflexive infinite-dimensional Banach space $(X,\|\cdot\|)$. 
An example where $\tau$ does not coincide with the $\|\cdot\|$-topology is the following one. We recall that a Banach space 
$(X,\|\cdot\|)$ has the \emph{Schur property} if all $\sigma(X,X^{\ast})$-convergent sequences are $\|\cdot\|$-convergent 
(see \cite[p.~253]{fabian2011}). 

\begin{rem}\label{rem:semireflexive_but_not_semiMontel}
Let $(X,\|\cdot\|)$ be a Banach space without the Schur property, e.g.~the space $(c_{0},\|\cdot\|_{\infty})$ (see \cite[p.~252]{fabian2011}), 
and denote by $\mu(X^{\ast},X)$ the Mackey topology on $X^{\ast}$. 
Then $(X^{\ast},\|\cdot\|_{X^{\ast}},\mu(X^{\ast},X))$ is a semi-reflexive Saks space by 
\prettyref{rem:pre_Saks_unit_ball_char} (c) and the Banach--Alaoglu theorem, and is not semi-Montel 
by \prettyref{rem:pre_Saks_unit_ball_char} (d) and \cite[Proposition 3.1, p.~275]{schluechtermann1991} 
since $(X,\|\cdot\|)$ does not have the Schur property. We also observe that 
$\gamma(\|\cdot\|_{X^{\ast}},\mu(X^{\ast},X))=\mu(X^{\ast},X)$ by the second example in \cite[p.~593]{conradie2006}.
\end{rem}

\begin{rem}\fakephantomsection\label{rem:semireflexive_pre_Saks_already_Saks}
\begin{enumerate}
\item[(a)] If $(X,\|\cdot\|,\tau)$ is semi-reflexive Saks space, then $(X,\|\cdot\|)$ is complete. Indeed, by \cite[Chap.~IV, 5.5, Corollary 1, p.~144]{schaefer1971} the semi-reflexivity implies that $(X,\gamma)$ is quasi-complete. 
Therefore $(X,\|\cdot\|)$ is complete since $\gamma\leq\tau_{\|\cdot\|}$ and completeness of a normed space is equivalent 
to quasi-completeness. 
\item[(b)] If $(X,\|\cdot\|,\tau)$ is a semi-Montel Saks space, then it is complete. Indeed, this follows 
from \prettyref{rem:pre_Saks_unit_ball_char} (b) and (d).
\item[(c)] If $(X,\|\cdot\|)$ is a normed space and $\tau$ a locally convex Hausdorff topology on $X$ such that $B_{\|\cdot\|}$ 
is $\tau$-compact, then $\tau\leq\tau_{_{\|\cdot\|}}$ and $(X,\|\cdot\|,\tau)$ is a semi-Montel Saks space. 
Indeed, the $\tau$-compactness of $B_{\|\cdot\|}$ implies that the identity map $\id\colon (X,\|\cdot\|)\to (X,\tau)$ is 
bounded, i.e.~it maps $\|\cdot\|$-bounded sets to $\tau$-bounded sets. 
Since the normed space $(X,\|\cdot\|)$ is bornological, this implies that $\id$ is continuous by 
\cite[Proposition 24.13, p.~283]{meisevogt1997} and so $\tau\leq\tau_{\|\cdot\|}$. 
Thus $(X,\|\cdot\|,\tau)$ is a pre-Saks space, yielding the statement by \prettyref{rem:pre_Saks_unit_ball_char} (d).
\item[(d)] If $(X,\|\cdot\|,\tau)$ is a semi-Montel Saks space and $\tau_{0}\leq\tau$ a locally convex Hausdorff topology on $X$, 
then $(X,\|\cdot\|,\tau_{0})$ is a semi-Montel Saks space and $\gamma(\|\cdot\|,\tau)=\gamma(\|\cdot\|,\tau_{0})$. 
Indeed, by the remarks above \cite[Chap.~3, \S9, Proposition 2, p.~231]{horvath1966} we have that $\tau_{0}$ coincides with $\tau$ 
on $B_{\|\cdot\|}$. Thus $(X,\|\cdot\|,\tau_{0})$ is a semi-Montel Saks space by \prettyref{rem:pre_Saks_unit_ball_char} (d) and 
$\gamma(\|\cdot\|,\tau)=\gamma(\|\cdot\|,\tau_{0})$ by \cite[I.1.6 Corollary (ii), p.~6]{cooper1978}.
\item[(e)] Let $(X,\|\cdot\|)$ be an isometric dual Banach space with isometric Banach predual $(Y,\varphi)$. 
Then $B_{\|\cdot\|}(w)\coloneqq\{x\in X\;|\;\|x-w\|\leq 1\}$ is $\sigma_{\varphi}(X,Y)$-compact for any $w\in X$ 
and $(X,\|\cdot\|,\sigma_{\varphi}(X,Y))$ is a semi-Montel Saks space. Indeed, due to the Banach--Alagolu theorem 
$B_{\|\cdot\|_{Y^{\ast}}}(\varphi(w))$ is $\sigma(Y^{\ast},Y)$-compact. As $\varphi$ is an isometric isomorphism, we have 
$\varphi(B_{\|\cdot\|}(w))=B_{\|\cdot\|_{Y^{\ast}}}(\varphi(w))$ and thus $B_{\|\cdot\|}(w)$ is $\sigma_{\varphi}(X,Y)$-compact, implying the statement by \prettyref{rem:pre_Saks_unit_ball_char} (d).
\end{enumerate}
\end{rem}

The next observation links the existence of an isometric prebidual to $\gamma$.

\begin{prop}\label{prop:dixmier_ng_mixed}
Let $(X,\|\cdot\|,\tau)$ be a Saks space, $X_{\gamma}'\coloneqq (X,\gamma)'$ and $\|\cdot\|_{X_{\gamma}'}$ 
denote the restriction of the dual norm $\|\cdot\|_{X^{\ast}}$ to $X_{\gamma}'$.
\begin{enumerate}
\item Then $X_{\gamma}'$ is a $\|\cdot\|_{X^{\ast}}$-closed subspace of $X^{\ast}$ and 
$(X,\gamma)_{b}'$ is completely norm\-able by $\|\cdot\|_{X_{\gamma}'}$.\footnote{Part (a) shows that \cite[Exercise 19.4, p.~201]{treves2006} is wrong because $(X,\gamma)_{b}'$ is normable although $(X,\gamma)$ is in general not normable by \cite[I.1.15 Proposition, p.~12]{cooper1978}. However, $(X,\gamma)$ is a gDF-space by \cite[I.1.27 Remark, p.~19--20]{cooper1978} and 
thus quasi-normable by \cite[12.4.7 Theorem, p.~260]{jarchow1981}.}
\item If $(X,\|\cdot\|,\tau)$ is semi-reflexive, then the evaluation map 
\[
\mathcal{I}\colon(X,\|\cdot\|)\to (X_{\gamma}',\|\cdot\|_{X_{\gamma}'})^{\ast},\; x\longmapsto [x'\mapsto x'(x)], 
\]
is an isometric isomorphism. In particular, $(X,\|\cdot\|)$ is an isometric bidual Banach space with isometric prebidual 
$((X,\gamma),\mathcal{I})$ and isometric Banach predual $((X_{\gamma}',\|\cdot\|_{X_{\gamma}'}),\mathcal{I})$. 
\end{enumerate}
\end{prop}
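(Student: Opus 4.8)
The plan is to treat the two parts in turn, first pinning down $X_{\gamma}'$ as a concrete norm-closed subspace of $X^{\ast}$ and identifying its strong topology, and then using semi-reflexivity together with the Saks structure to upgrade the canonical embedding to an isometric isomorphism.

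For part (a) I would start from two standard properties of the mixed topology $\gamma=\gamma(\|\cdot\|,\tau)$ (see \cite{wiweger1961,cooper1978}): first, a linear functional $x'$ on $X$ lies in $X_{\gamma}'$ if and only if its restriction to $B_{\|\cdot\|}$ is $\tau$-continuous, because $\gamma$ agrees with $\tau$ on $\|\cdot\|$-bounded sets and $\gamma$ is the finest linear topology with this property; second, the $\gamma$-bounded sets coincide with the $\|\cdot\|$-bounded sets. From the first property, $\|\cdot\|_{X^{\ast}}$-closedness follows by a uniform-limit argument: if $x_{n}'\to x'$ in $\|\cdot\|_{X^{\ast}}$ with $x_{n}'\in X_{\gamma}'$, then $x_{n}'\to x'$ uniformly on $B_{\|\cdot\|}$, and since a uniform limit of $\tau$-continuous functions on $(B_{\|\cdot\|},\tau)$ is $\tau$-continuous, $x'|_{B_{\|\cdot\|}}$ is $\tau$-continuous, so $x'\in X_{\gamma}'$. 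As $X^{\ast}$ is a Banach space and $X_{\gamma}'$ is a closed subspace, $(X_{\gamma}',\|\cdot\|_{X_{\gamma}'})$ is a Banach space. The second property then gives that the strong topology of $(X,\gamma)_{b}'$, namely uniform convergence on $\gamma$-bounded sets, equals uniform convergence on $\|\cdot\|$-bounded sets, i.e.\ the operator-norm topology; hence $(X,\gamma)_{b}'$ is completely normable by $\|\cdot\|_{X_{\gamma}'}$.

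For part (b) I would identify $\mathcal{I}$ with the canonical embedding of $(X,\gamma)$ into its strong bidual. By (a), $Z\coloneqq(X,\gamma)_{b}'=(X_{\gamma}',\|\cdot\|_{X_{\gamma}'})$ is a Banach space, so its strong dual is $Z^{\ast}=(X_{\gamma}',\|\cdot\|_{X_{\gamma}'})^{\ast}$ with the operator norm, and $\mathcal{I}$ is exactly the canonical map $X\to Z^{\ast}$. Semi-reflexivity of $(X,\gamma)$ means precisely that this map is surjective. It then remains to show $\mathcal{I}$ is isometric. The bound $\|\mathcal{I}(x)\|_{Z^{\ast}}\leq\|x\|$ is immediate from $|x'(x)|\leq\|x'\|_{X^{\ast}}\|x\|$. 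For the reverse inequality I would use the Saks structure: choose a directed system $\Gamma_{\tau}$ of $\tau$-continuous seminorms with $\|x\|=\sup_{q\in\Gamma_{\tau}}q(x)$. Each $q\in\Gamma_{\tau}$ is $\gamma$-continuous (as $\tau\leq\gamma$) and satisfies $q\leq\|\cdot\|$; given $x\in X$, the seminorm version of Hahn--Banach yields a $\K$-linear functional $x_{q}'$ with $x_{q}'(x)=q(x)$ and $|x_{q}'|\leq q\leq\|\cdot\|$, whence $x_{q}'\in X_{\gamma}'$ with $\|x_{q}'\|_{X_{\gamma}'}\leq 1$. Therefore $\|\mathcal{I}(x)\|_{Z^{\ast}}\geq |x_{q}'(x)|=q(x)$ for every $q$, and taking the supremum over $\Gamma_{\tau}$ gives $\|\mathcal{I}(x)\|_{Z^{\ast}}\geq\|x\|$. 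Thus $\mathcal{I}$ is a bijective isometry, hence an isometric isomorphism, and the two ``in particular'' statements are then just \prettyref{defn:isom_prebidual} and the observation following it, applied with $Y=(X,\gamma)$, $\vertiii{\cdot}=\|\cdot\|_{X_{\gamma}'}$ and $\varphi=\mathcal{I}$.

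The main obstacle is the reverse isometry inequality in (b): this is exactly where the Saks-space hypothesis (the representation $\|x\|=\sup_{q\in\Gamma_{\tau}}q(x)$ by $\tau$-continuous, hence $\gamma$-continuous, seminorms) is indispensable, since without it $X_{\gamma}'$ need not be norming for $X$ and $\mathcal{I}$ would fail to be isometric. By contrast, surjectivity comes essentially for free once the strong bidual has been correctly identified as $(X_{\gamma}',\|\cdot\|_{X_{\gamma}'})^{\ast}$ through part (a) and the coincidence of $\gamma$-bounded and $\|\cdot\|$-bounded sets.
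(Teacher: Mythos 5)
Your proof is correct, and its overall architecture coincides with the paper's: part (a) identifies $(X,\gamma)_{b}'$ with the Banach space $(X_{\gamma}',\|\cdot\|_{X_{\gamma}'})$, and part (b) obtains bijectivity of $\mathcal{I}$ from part (a) plus semi-reflexivity, and the isometry from the fact that the unit ball of $X_{\gamma}'$ is norming for $(X,\|\cdot\|)$. The difference is that you prove the ingredients which the paper merely cites. For (a) the paper invokes \cite[I.1.18 Proposition, p.~15]{cooper1978}, whereas you derive the $\|\cdot\|_{X^{\ast}}$-closedness from the characterisation of $X_{\gamma}'$ as the linear functionals that are $\tau$-continuous on $B_{\|\cdot\|}$ (this is \cite[I.1.7 Corollary, p.~8]{cooper1978}, which the paper itself uses immediately after the proposition) together with a uniform-limit argument, and the complete normability of $(X,\gamma)_{b}'$ from the coincidence of $\gamma$-bounded and $\|\cdot\|$-bounded sets. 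For the isometry in (b) the paper cites \cite[I.3.1 Lemma, p.~27]{cooper1978} and \cite[Lemma 5.5 (a), p.~2680--2681]{kruse_meichnser_seifert2018} for the norming equality
\[
\sup_{\substack{x'\in X_{\gamma}' \\ \|x'\|_{X^{\ast}}\leq 1}}|x'(x)|=\|x\|,
\]
whereas you prove it directly by applying the dominated-extension form of Hahn--Banach to the seminorms of the Saks system $\Gamma_{\tau}$; this is precisely the content of the cited lemmas and correctly isolates where condition \eqref{eq:saks} enters (without it $X_{\gamma}'$ need not be norming, as you note). What the citation-based proof buys is brevity; what yours buys is self-containedness and transparency about the role of the Saks hypothesis. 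Both routes are sound.
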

\begin{proof}
Part (a) follows from \cite[I.1.18 Proposition, p.~15]{cooper1978}. The bijectivity of $\mathcal{I}$ in part (b) 
is a consequence of part (a) and the semi-reflexivity. That $\mathcal{I}$ is an isometry follows from  
\[
  \|\mathcal{I}(x)\|_{(X_{\gamma}',\|\cdot\|_{X_{\gamma}'})^{\ast}}
=\sup_{\substack{x'\in X_{\gamma}' \\ \|x'\|_{X^{\ast}}\leq 1}}|x'(x)|
=\|x\|
\]
for all $x\in X$ where the second equation is due to \cite[I.3.1 Lemma, p.~27]{cooper1978} and 
\cite[Lemma 5.5 (a), p.~2680--2681]{kruse_meichnser_seifert2018}. 
\end{proof}

Comparing \prettyref{prop:dixmier_ng_mixed} (b) with the Dixmier--Ng theorem \cite[Theorem 1, p.~279]{ng1971}, we first note that
\[
X_{\gamma}'=\{x'\colon X\to \K\;|\; x'\;\text{linear on $X$ and $\tau$-continuous on }B_{\|\cdot\|}\}\eqqcolon V
\]
by \cite[I.1.7 Corollary, p.~8]{cooper1978} where $\K=\R$ or $\C$ is the scalar field of $X$. 
The (proof of the) Dixmier--Ng theorem states that the map $\mathcal{I}\colon(X,\|\cdot\|)\to 
(V,{\|\cdot\|_{X^{\ast}}}_{\mid V})^{\ast}$ is an isometric isomorphism if $B_{\|\cdot\|}$ is $\tau$-compact for 
some locally convex Hausdorff topology $\tau$ on $X$. The latter condition implies that $(X,\|\cdot\|,\tau)$ is a semi-Montel 
Saks space by \prettyref{rem:semireflexive_pre_Saks_already_Saks} (c). 
So, a priori \prettyref{prop:dixmier_ng_mixed} (b) is more general than the Dixmier--Ng theorem. However, we will see in 
the next proposition that they are actually equivalent.

\begin{cor}\label{cor:existence_isom_banach_predual}
Let $(X,\|\cdot\|)$ be a normed space. 
Then the following assertions are equivalent.
\begin{enumerate}
\item[(a)] $(X,\|\cdot\|)$ has an isometric Banach predual.
\item[(b)] $(X,\|\cdot\|)$ has an isometric complete semi-Montel prebidual $(Y,\varphi)$.
\item[(c)] $(X,\|\cdot\|)$ has an isometric prebidual $(Y,\varphi)$.
\item[(d)] $(X,\|\cdot\|,\tau)$ is a semi-reflexive Saks space for some locally convex Hausdorff topology $\tau\leq\tau_{\|\cdot\|}$.
\item[(e)] $(X,\|\cdot\|,\tau)$ is a semi-Montel Saks space for some locally convex Hausdorff topology $\tau\leq\tau_{\|\cdot\|}$.
\item[(f)] There is a closed linear subspace $V\subset X^{\ast}$ such that $B_{\|\cdot\|}$ is $\sigma(X,V)$-compact. 
\item[(g)] $B_{\|\cdot\|}$ is $\tau$-compact for some locally convex Hausdorff topology $\tau$ on $X$.
\end{enumerate}
\end{cor}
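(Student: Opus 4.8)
The plan is to make assertion~(e) the hub and verify all remaining statements against it, exploiting the unit-ball dictionary of \prettyref{rem:pre_Saks_unit_ball_char}, the structural implications collected in \prettyref{rem:semireflexive_pre_Saks_already_Saks}, and \prettyref{prop:dixmier_ng_mixed}. First I would close the cycle (a)$\Rightarrow$(e)$\Rightarrow$(d)$\Rightarrow$(c)$\Rightarrow$(a). For (a)$\Rightarrow$(e), an isometric Banach predual $(Y,\varphi)$ makes $(X,\|\cdot\|,\sigma_{\varphi}(X,Y))$ a semi-Montel Saks space by \prettyref{rem:semireflexive_pre_Saks_already_Saks}(e), and $\sigma_{\varphi}(X,Y)\leq\tau_{\|\cdot\|}$ holds because its generating seminorms $\sup_{y\in N}|\varphi(\cdot)(y)|$ are $\|\cdot\|$-continuous. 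Then (e)$\Rightarrow$(d) is immediate, since a semi-Montel space is semi-reflexive (bounded sets are relatively compact, hence relatively weakly compact); (d)$\Rightarrow$(c) is exactly \prettyref{prop:dixmier_ng_mixed}(b), which hands us the isometric prebidual $((X,\gamma),\mathcal{I})$; and (c)$\Rightarrow$(a) is the observation recorded right after \prettyref{defn:isom_prebidual} that an isometric prebidual $(Y,\varphi)$ yields the isometric Banach predual $((Y',\vertiii{\cdot}),\varphi)$.

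Next I would attach (b) and (g). The prebidual produced by \prettyref{prop:dixmier_ng_mixed}(b) is $((X,\gamma),\mathcal{I})$, and for a semi-Montel Saks space the space $(X,\gamma)$ is semi-Montel by definition and complete by \prettyref{rem:semireflexive_pre_Saks_already_Saks}(b); hence (e)$\Rightarrow$(b), while (b)$\Rightarrow$(c) is trivial because (b) is a special instance of (c). The equivalence (e)$\Leftrightarrow$(g) is read straight off the dictionary: (e)$\Rightarrow$(g) is \prettyref{rem:pre_Saks_unit_ball_char}(d), and (g)$\Rightarrow$(e) is \prettyref{rem:semireflexive_pre_Saks_already_Saks}(c), the latter also furnishing the relation $\tau\leq\tau_{\|\cdot\|}$ for free.

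It remains to link (f). For (e)$\Rightarrow$(f) I would take $V\coloneqq X_{\gamma}'$, which is $\|\cdot\|_{X^{\ast}}$-closed in $X^{\ast}$ by \prettyref{prop:dixmier_ng_mixed}(a); since $B_{\|\cdot\|}$ is $\tau$-compact and $\gamma$ agrees with $\tau$ on $\|\cdot\|$-bounded sets, $B_{\|\cdot\|}$ is $\gamma$-compact, and as $\sigma(X,V)=\sigma(X,(X,\gamma)')\leq\gamma$ is a coarser Hausdorff topology the identity carries this compact set onto a $\sigma(X,V)$-compact one, which is (f). Conversely, for (f)$\Rightarrow$(g) the topology $\sigma(X,V)$ is locally convex with $\sigma(X,V)\leq\tau_{\|\cdot\|}$, so it is an admissible witness for (g) as soon as it is Hausdorff.

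The only genuinely delicate point, and the one I would flag, is the Hausdorffness hidden in (f): since $\sigma(X,V)$ is Hausdorff exactly when $V$ separates the points of $X$, and since the choice $V=\{0\}$ would make $B_{\|\cdot\|}$ vacuously quasi-compact, (f) has content only when ``$\sigma(X,V)$-compact'' is read in the Hausdorff sense, equivalently when $V$ is required to separate the points of $X$. I would make this reading explicit, note that the $V=X_{\gamma}'$ constructed in (e)$\Rightarrow$(f) does separate points because its members are continuous functionals on the Hausdorff space $(X,\gamma)$, and conversely argue that $\sigma(X,V)$-compactness of $B_{\|\cdot\|}$ (in the Hausdorff sense) forces $V$ to separate the points of $X$, so that $\sigma(X,V)$ is genuinely a locally convex Hausdorff topology in (f)$\Rightarrow$(g). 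Apart from this bookkeeping the sole non-formal step is the compactness transfer in (e)$\Rightarrow$(f) from $\gamma$ down to the weak topology $\sigma(X,X_{\gamma}')$, which is handled by the continuous-image-of-a-compact-set argument above.
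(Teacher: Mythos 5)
Your proposal is correct, and every step checks out against the paper's machinery, but the route differs from the paper's in its decomposition and in two substantive choices. The paper proves the single cycle (c)$\Rightarrow$(a)$\Rightarrow$(f)$\Rightarrow$(g)$\Rightarrow$(e)$\Rightarrow$(d)$\Rightarrow$(c) together with (e)$\Rightarrow$(b)$\Rightarrow$(c); in particular (f) is obtained directly from (a), with witness $V\coloneqq\Phi_{\varphi}(Y)$, the isometric image of the predual inside $X^{\ast}$, which is closed by \prettyref{rem:isom_predual_into_dual} and satisfies $\sigma(X,V)=\sigma_{\varphi}(X,Y)$, the compactness coming from \prettyref{rem:semireflexive_pre_Saks_already_Saks} (e). You instead make (e) the hub, reach it from (a) in one step via \prettyref{rem:semireflexive_pre_Saks_already_Saks} (e), and derive (f) from (e) with the different witness $V\coloneqq X_{\gamma}'$, closed by \prettyref{prop:dixmier_ng_mixed} (a); your compactness transfer is sound, since $\gamma$ agrees with $\tau$ on $\|\cdot\|$-bounded sets (so $B_{\|\cdot\|}$ is $\gamma$-compact) and $\sigma(X,X_{\gamma}')\leq\gamma$, so the identity pushes the compact set down. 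The second real difference is (f)$\Rightarrow$(g): the paper outsources the Hausdorffness of $\sigma(X,V)$ to Dixmier (Th\'{e}or\`{e}me 14 and D\'{e}finition 3 of the 1948 paper), whereas you argue it directly---reading compactness in the Hausdorff sense, $V$ separates the points of $B_{\|\cdot\|}$, and since $0\in B_{\|\cdot\|}$, scaling yields separation of all points of $X$. This makes explicit a convention the paper delegates to its reference (with the non-Hausdorff reading of compactness, $V=\{0\}$ would trivialise (f) and the corollary would be false), so your flag is a genuine clarification rather than pedantry; what the paper's citation route buys in exchange is brevity and alignment with the historical source. Two small points of phrasing: in (e)$\Rightarrow$(f) the separation of points by $X_{\gamma}'$ should be attributed to the Hahn--Banach theorem applied to the locally convex Hausdorff space $(X,\gamma)$---mere continuity of the members of $X_{\gamma}'$ does not by itself give separation---and in (f)$\Rightarrow$(g) the observation $\sigma(X,V)\leq\tau_{\|\cdot\|}$ is not needed, since assertion (g) imposes no comparability between $\tau$ and $\tau_{\|\cdot\|}$.
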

\begin{proof}
(c)$\Rightarrow$(a) Since $(X,\|\cdot\|)$ has an isometric prebidual $(Y,\varphi)$, the strong dual $Y_{b}'$ is 
completely normable by a norm $\vertiii{\cdot}$ such that $\varphi\colon (X,\|\cdot\|) \to (Y',\vertiii{\cdot})^{\ast}$ is an 
isometric isomorphism. Hence $((Y',\vertiii{\cdot}),\varphi)$ is an isometric Banach predual of $(X,\|\cdot\|)$.

(a)$\Rightarrow$(f) Since $(X,\|\cdot\|)$ has an isometric Banach predual, there are a Banach space $Y$ and 
an isometric isomorphism $\varphi\colon (X,\|\cdot\|)\to Y^{\ast}$. 
Due to \prettyref{rem:semireflexive_pre_Saks_already_Saks} (e) $B_{\|\cdot\|}$ is $\sigma_{\varphi}(X,Y)$-compact. 
Using that $V\coloneqq\Phi_{\varphi}(Y)$ is closed in $X^{\ast}$ by \prettyref{rem:isom_predual_into_dual} 
and $\sigma_{\varphi}(X,Y)=\sigma(X,V)$, we obtain assertion (f).  

(f)$\Rightarrow$(g) Due to \cite[Th\'{e}or\`{e}me 14, p.~1066]{dixmier1948} and \cite[D\'{e}finition 3, p.~1065]{dixmier1948} 
the locally convex topology $\sigma(X,V)$ is Hausdorff,
% and $V$ norming (has \emph{caract\'eristique} $1$, see \cite[p.~1057]{dixmier1948}), 
which implies (g) with $\tau\coloneqq \sigma(X,V)$.

(g)$\Rightarrow$(e) This implication follows from \prettyref{rem:semireflexive_pre_Saks_already_Saks} (c).

(e)$\Rightarrow$(d) This implication follows from the observation that every semi-Montel space is semi-reflexive. 

(d)$\Rightarrow$(c) This implication follows from \prettyref{prop:dixmier_ng_mixed} (b) with 
$Y\coloneqq (X,\gamma)$ and $\varphi\coloneqq\mathcal{I}$. 

(e)$\Rightarrow$(b) This implication follows from \prettyref{prop:dixmier_ng_mixed} (b) with 
$Y\coloneqq (X,\gamma)$ and $\varphi\coloneqq\mathcal{I}$ since $(X,\gamma)$ is a semi-Montel space, 
thus semi-reflexive, and complete by \prettyref{rem:semireflexive_pre_Saks_already_Saks} (b). 

(b)$\Rightarrow$(c) This implication is obvious.
\end{proof}

The equivalence (a)$\Leftrightarrow$(f) of \prettyref{cor:existence_isom_banach_predual} is already given in 
\cite[Th\'{e}or\`{e}me 19, p.~1069]{dixmier1948} of Dixmier\footnote{The condition that $V\subset X^{\ast}$ should be closed is missing in \cite[Th\'{e}or\`{e}me 19, p.~1069]{dixmier1948}. However, due to \cite[Th\'{e}or\`{e}me 16, p.~1068]{dixmier1948}, \cite[l.~10, p.~1069]{dixmier1948} and \cite[Th\'{e}or\`{e}me 14, p.~1066]{dixmier1948} it should be added.}, which inspired the Dixmier--Ng theorem \cite[Theorem 1, p.~279]{ng1971}, i.e.~the implication (g)$\Rightarrow$(a). 

\begin{exa}\label{ex:subspace_cont_mixed}
(i) For a discrete space $\Omega$ and a function $v\colon \Omega\to (0,\infty)$ we set
\[
\ell v(\Omega)\coloneqq\{f\colon\Omega\to \K\;|\;\|f\|_{v}\coloneqq\sup_{x\in\Omega}|f(x)|v(x)<\infty\}
\]
as well as $\ell^{\infty}(\Omega)\coloneqq\ell w(\Omega)$ and $\|f\|_{\infty}\coloneqq \|f\|_{w}$ with 
$w(x)\coloneqq 1$ for all $x\in\Omega$. 
We note that the multiplication operator
\[
M_{v}\colon\ell v(\Omega)\to\ell^{\infty}(\Omega),\;M_{v}(f)\coloneqq fv,
\]
is an isometric isomorphism w.r.t~to the norms $\|\cdot\|_{v}$ and $\|\cdot\|_{\infty}$, and a topological isomorphism if both spaces 
are equipped with the compact-open topology $\tau_{\operatorname{co}}$. By \cite[II.1.24 Remark 4), p.~88--89]{cooper1978} 
$B_{\|\cdot\|_{\infty}}$ is $\tau_{\operatorname{co}}$-compact. Hence $B_{\|\cdot\|_{v}}$ is $\tau_{\operatorname{co}}$-compact due 
to the isomorphism $M_{v}$. Thus $(\ell v(\Omega),\|\cdot\|_{v},\tau_{\operatorname{co}})$ is a complete semi-Montel Saks space 
by \prettyref{rem:pre_Saks_unit_ball_char} (d) and \prettyref{rem:semireflexive_pre_Saks_already_Saks} (b). 

(ii) For an open set $\Omega\subset\R^{d}$ we define the kernel
\[
\mathcal{C}_{P}(\Omega)\coloneqq\{f\in\mathcal{C}^{\infty}(\Omega)\;|\;f\in\ker P(\partial)\}
\]
of a hypoelliptic linear partial differential operator $P(\partial)\colon\mathcal{C}^{\infty}(\Omega)\to\mathcal{C}^{\infty}(\Omega)$ 
where $\mathcal{C}^{\infty}(\Omega)$ is the space of infinitely continuously partially differentiable $\K$-valued functions 
on $\Omega$. For a continuous function $v\colon\Omega\to(0,\infty)$ we define the weighted kernel
\[
  \mathcal{C}_{P}v(\Omega)
\coloneqq \{f\in\mathcal{C}_{P}(\Omega)\;|\;\|f\|_{v}\coloneqq\sup_{x\in\Omega}|f(x)|v(x)<\infty\}.
\]
$B_{\|\cdot\|_{v}}$ is compact in the Montel space $(\mathcal{C}_{P}(\Omega),\tau_{\operatorname{co}})$ 
by the proof of \cite[5.2.30 Corollary, p.~87]{kruse2023} and so in $(\mathcal{C}_{P}v(\Omega),\tau_{\operatorname{co}})$ 
as well. Thus $(\mathcal{C}_{P}v(\Omega),\|\cdot\|_{v},\tau_{\operatorname{co}})$ is a complete semi-Montel Saks space 
by \prettyref{rem:pre_Saks_unit_ball_char} (d) and \prettyref{rem:semireflexive_pre_Saks_already_Saks} (b). 

(iii) For an open subset $\Omega$ of a complex locally convex Hausdorff $k$-space 
let $\mathcal{H}(\Omega)$ be the space of holomorphic functions $f\colon\Omega\to \C$, i.e.~the space of G\^{a}teaux-holomorphic 
and continuous functions $f\colon\Omega\to \C$ (see \cite[Definition 3.6, p.~152]{dineen1999}),
and for a continuous function $v\colon\Omega\to (0,\infty)$ we set 
\[
\mathcal{H}v(\Omega)\coloneqq \{f\in \mathcal{H}(\Omega)\;|\;\|f\|_{v}\coloneqq\sup_{x\in\Omega}|f(x)|v(x)<\infty\}.
\]
It is easily seen that $B_{\|\cdot\|_{v}}$ is closed in $(\mathcal{H}(\Omega),\tau_{\operatorname{co}})$.
Further, $(\mathcal{H}(\Omega),\tau_{\operatorname{co}})$ is a semi-Montel space 
by \cite[Proposition 3.37, p.~130]{dineen1981} since $\Omega$ is an open subset of a locally convex Hausdorff $k$-space. 
This implies that $B_{\|\cdot\|_{v}}$ is compact in $(\mathcal{H}(\Omega),\tau_{\operatorname{co}})$ 
and so in $(\mathcal{H}v(\Omega),\tau_{\operatorname{co}})$ as well. 
Thus $(\mathcal{H}v(\Omega),\|\cdot\|_{v},\tau_{\operatorname{co}})$ is a complete semi-Montel Saks space 
by \prettyref{rem:pre_Saks_unit_ball_char} (d) and \prettyref{rem:semireflexive_pre_Saks_already_Saks} (b). 

(iv) For a continuous function $v\colon\D\to(0,\infty)$ with $\D\coloneqq\{z\in\C\;|\;|z|<1\}$ we define the Bloch type space 
\[
\mathcal{B}v(\D)\coloneqq\{f\in\mathcal{H}(\D)\;|\;\|f\|_{\mathcal{B}v}\coloneqq |f(0)|+\sup_{z\in\D}|f'(z)|v(z)<\infty\}.
\]
By the Weierstra{\ss} theorem $B_{\|\cdot\|_{\mathcal{B}v}}$ is closed in the Montel space 
$(\mathcal{H}(\D),\tau_{\operatorname{co}})$.
Hence the set $B_{\|\cdot\|_{\mathcal{B}v}}$ is also compact in $(\mathcal{B}v(\D), \tau_{\operatorname{co}})$ (cf.~\cite[p.~4]{eklund2017} for radial, non-increasing $v$, and \cite[Corollary 3.8, p.~9--10]{kruse2019_3} for general $v$). 
Thus $(\mathcal{B}v(\D),\|\cdot\|_{\mathcal{B}v},\tau_{\operatorname{co}})$ is a complete semi-Montel Saks space 
by \prettyref{rem:pre_Saks_unit_ball_char} (d) and \prettyref{rem:semireflexive_pre_Saks_already_Saks} (b). 

(v) For a metric space $(\Omega,\d)$ with a base point denoted by $\mathbf{0}$, 
i.e.~a \emph{pointed metric space} in the sense of \cite[p.~1]{weaver2018}, 
we define the space of $\K$-valued Lipschitz continuous functions on $(\Omega,\d)$ that vanish at $\mathbf{0}$ by 
\[
\mathrm{Lip}_{0}(\Omega)
\coloneqq\{f\colon\Omega\to \K\;|\;f(\mathbf{0})=0\;\text{and}\;\|f\|_{\mathrm{Lip}}\coloneqq\sup_{\substack{x,y\in\Omega\\x\neq y}}
\frac{|f(x)-f(y)|}{\d(x,y)}<\infty\}.
\] 
For all $f\in B_{\|\cdot\|_{\mathrm{Lip}}}$ we have
\[
|f(x)-f(y)|\leq \d(x,y),\quad x,y\in\Omega,
\]
which implies 
\[
|f(x)|=|f(x)-f(\mathbf{0})|\leq \d(x,\mathbf{0}),\quad x\in\Omega.
\]
It follows that $B_{\|\cdot\|_{\mathrm{Lip}}}$ is (uniformly) equicontinuous and 
$\{f(x)\;|\;f\in B_{\|\cdot\|_{\mathrm{Lip}}}\}$ is bounded in $\K$ for all $x\in\Omega$. 
Ascoli's theorem (see e.g.~\cite[Theorem 47.1, p.~290]{munkres2000}) implies that $B_{\|\cdot\|_{\mathrm{Lip}}}$
is compact in $(\mathcal{C}(\Omega),\tau_{\operatorname{co}})$ and so in $(\mathrm{Lip}_{0}(\Omega), \tau_{\operatorname{co}})$ 
as well. Thus $(\mathrm{Lip}_{0}(\Omega),\|\cdot\|_{\mathrm{Lip}},\tau_{\operatorname{co}})$ is a complete semi-Montel Saks space 
by \prettyref{rem:pre_Saks_unit_ball_char} (d) and \prettyref{rem:semireflexive_pre_Saks_already_Saks} (b). 

(vi) For $k\in\N_{0}$ and an open bounded set $\Omega\subset\R^{d}$ let $\mathcal{C}^{k}(\Omega)$ denote the space of $k$-times 
continuously partially differentiable $\K$-valued functions on $\Omega$. 
We define the space of $k$-times continuously partially differentiable $\K$-valued functions on $\Omega$ whose partial derivatives 
up to order $k$ are continuously extendable to the boundary of $\Omega$ by 
\[
 \mathcal{C}^{k}(\overline{\Omega})\coloneqq\{f\in\mathcal{C}^{k}(\Omega)\;|\;\partial^{\beta}f\;
 \text{cont.\ extendable on}\;\overline{\Omega}\;\text{for all}\;\beta\in\N^{d}_{0},\,|\beta|\leq k\}
\]
which we equip with the norm given by 
\[
 |f|_{\mathcal{C}^{k}}\coloneqq \sup_{\substack{x\in \Omega\\ \beta\in\N^{d}_{0}, |\beta|\leq k}}
 |\partial^{\beta}f(x)|, \quad f\in\mathcal{C}^{k}(\overline{\Omega}).
\]
The space of functions in $\mathcal{C}^{k}(\overline{\Omega})$ such 
that all its $k$-th partial derivatives are $\alpha$-H\"older continuous with $0<\alpha\leq 1$ is given by  
\[
\mathcal{C}^{k,\alpha}(\overline{\Omega})\coloneqq 
\bigl\{f\in\mathcal{C}^{k}(\overline{\Omega})\;|\;\|f\|_{\mathcal{C}^{k,\alpha}}<\infty\bigr\}
\]
where
\[
\|f\|_{\mathcal{C}^{k,\alpha}}\coloneqq |f|_{\mathcal{C}^{k}}
+\sup_{\beta\in\N^{d}_{0}, |\beta|=k}\sup_{\substack{x,y\in\Omega\\x\neq y}}
\frac{|\partial^{\beta}f(x)-\partial^{\beta}f(y)|}{|x-y|^{\alpha}}.
\]
If $k\geq 1$, we assume additionally that $\Omega$ has Lipschitz boundary. The set $B_{\|\cdot\|_{\mathcal{C}^{k,\alpha}}}$ 
is relatively $|\cdot|_{\mathcal{C}^{k}}$-compact in $\mathcal{C}^{k}(\overline{\Omega})$ 
by \cite[8.6 Einbettungssatz in H\"older-R\"aumen, p.~338]{alt2012}, and it is easily seen that is also 
$|\cdot|_{\mathcal{C}^{k}}$-closed by a pointwise argument. 
Thus $(\mathcal{C}^{k,\alpha}(\overline{\Omega}),\|\cdot\|_{\mathcal{C}^{k,\alpha}},\tau_{|\cdot|_{\mathcal{C}^{k}}})$ 
is a complete semi-Montel Saks space by \prettyref{rem:pre_Saks_unit_ball_char} (d) 
and \prettyref{rem:semireflexive_pre_Saks_already_Saks} (b). 
\end{exa}

\prettyref{ex:subspace_cont_mixed} (iii) is already contained in \cite[Propositions 3.2, p.~78]{bierstedt_summers1993} 
if $\Omega$ is an open subset of $\C^{d}$. \prettyref{ex:subspace_cont_mixed} (v) is already contained 
in \cite[Theorem 2.1 (7), p.~642]{vargas2018} but the proof of the $\tau_{\operatorname{co}}$-compactness of 
$B_{\|\cdot\|_{\mathrm{Lip}_{0}}}$ is only sketched (see \cite[p.~641]{vargas2018}), which is why we included it here. 

\begin{exa}\label{ex:cont_lin_op_mixed}
Let $(X,\|\cdot\|_{X})$ and $(Y,\|\cdot\|_{Y})$ be Banach spaces and $L(X,Y)$ denote the space of continuous linear operators 
from $X$ to $Y$. If $X=Y$, we just write $L(X)\coloneqq L(X,X)$. The operator norm on $L(X,Y)$ is given by 
\[
\|T\|_{L(X,Y)}\coloneqq\sup_{x\in B_{\|\cdot\|_{X}}}\|Tx\|_{Y},\quad T\in L(X,Y).
\]
The weak operator topology $\tau_{\operatorname{wot}}$ on $L(X,Y)$ is 
induced by the directed system of seminorms given by 
\[
p_{N,M}(T)\coloneqq\sup_{x\in N, y'\in M}|y'(Tx)|,\quad T\in L(X,Y),
\]
for finite $N\subset X$ and finite $M\subset Y'$. If $Y$ is reflexive, then $B_{\|\cdot\|_{L(X,Y)}}$ is 
$\tau_{\operatorname{wot}}$-compact by \cite[Theorem 2.19, p.~1689]{choi2008} 
(cf.~\cite[Example 3.11 (d), p.~10]{kruse_schwenninger2022}).
Thus $(L(X,Y),\|\cdot\|_{L(X,Y)},\tau_{\operatorname{wot}})$ is a complete semi-Montel Saks space 
by \prettyref{rem:pre_Saks_unit_ball_char} (d) and \prettyref{rem:semireflexive_pre_Saks_already_Saks} (b) 
if $Y$ is reflexive. 
\end{exa}

The special case of \prettyref{ex:cont_lin_op_mixed} that $X=Y$ is a Hilbert space is already contained in 
\cite[IV.2.3 Proposition 5), p.~204--205]{cooper1978}.

\section{Isometric linearisation}
\label{sect:isom_linearisation}

Let $(\F,\|\cdot\|,\tau)$ be a semi-reflexive Saks space of $\K$-valued functions on a non-empty set $\Omega$. 
Suppose that $\delta_{x}\in \F_{\gamma}'$ for all $x\in\Omega$. 
Then $\mathcal{I}\colon(\F,\|\cdot\|)\to (\F_{\gamma}',\|\cdot\|_{\F_{\gamma}'})^{\ast}$ is an isometric isomorphism 
by \prettyref{prop:dixmier_ng_mixed} (b) and we have $\mathcal{I}(f)(\delta_{x})=\delta_{x}(f)=f(x)$ for all $f\in\F$ and 
$x\in\Omega$. Thus we have found a sufficient condition for the existence of a strong isometric Banach linearisation of $\F$.

\begin{cor}\label{cor:siB_linearisation}
Let $(\F,\|\cdot\|,\tau)$ be a semi-reflexive Saks space of $\K$-valued functions on a non-empty set $\Omega$. 
If $\Delta(x)\coloneqq\delta_{x}\in \F_{\gamma}'$ for all $x\in\Omega$, then 
$(\Delta,\F_{\gamma}',\mathcal{I})$ is a strong isometric Banach linearisation of $\F$.
\end{cor}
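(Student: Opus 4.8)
The plan is to verify directly that the triple $(\Delta,\F_{\gamma}',\mathcal{I})$ meets the three requirements of \prettyref{defn:isom_linearisation} (b): namely that $\F_{\gamma}'$ is a Banach space, that $\mathcal{I}$ is an isometric isomorphism from $\F$ onto $(\F_{\gamma}')^{\ast}$, and that the linearisation identity $\mathcal{I}(f)\circ\Delta=f$ holds for every $f\in\F$. The heavy lifting has already been done in \prettyref{prop:dixmier_ng_mixed}, so most of the work reduces to assembling its two parts and checking the pointwise identity.

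First I would observe that $(\F_{\gamma}',\|\cdot\|_{\F_{\gamma}'})$ is a Banach space: by \prettyref{prop:dixmier_ng_mixed} (a) the space $\F_{\gamma}'$ is a $\|\cdot\|_{\F^{\ast}}$-closed subspace of the Banach space $\F^{\ast}$, hence complete, so it qualifies as the Banach space $Y$ in the definition. Next, the hypothesis $\Delta(x)=\delta_{x}\in\F_{\gamma}'$ for all $x\in\Omega$ is exactly what guarantees that $\Delta\colon\Omega\to\F_{\gamma}'$ is a well-defined map into this space; this is the one place where the assumption of the corollary is genuinely used. Since $(\F,\|\cdot\|,\tau)$ is semi-reflexive, \prettyref{prop:dixmier_ng_mixed} (b) yields that the evaluation map $\mathcal{I}\colon(\F,\|\cdot\|)\to(\F_{\gamma}',\|\cdot\|_{\F_{\gamma}'})^{\ast}$ is an isometric isomorphism, which supplies the required map $T=\mathcal{I}$.

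It then remains to check the linearisation identity. For $f\in\F$ and $x\in\Omega$ I would compute, using the definition of $\mathcal{I}$ and the identification $\Delta(x)=\delta_{x}$,
\[
(\mathcal{I}(f)\circ\Delta)(x)=\mathcal{I}(f)(\delta_{x})=\delta_{x}(f)=f(x),
\]
so that $\mathcal{I}(f)\circ\Delta=f$ as functions on $\Omega$. This establishes the third requirement and completes the verification. I do not expect any genuine obstacle: the entire analytic content—completeness of $\F_{\gamma}'$ and the fact that $\mathcal{I}$ is a surjective isometry—is imported from \prettyref{prop:dixmier_ng_mixed}, and the only new ingredient is the hypothesis $\delta_{x}\in\F_{\gamma}'$, which makes $\Delta$ land in $\F_{\gamma}'$ and renders the evaluation $\mathcal{I}(f)(\delta_{x})$ meaningful.
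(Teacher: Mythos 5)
Your proposal is correct and follows essentially the same route as the paper: invoke \prettyref{prop:dixmier_ng_mixed} (b) to get that $\mathcal{I}$ is an isometric isomorphism onto $(\F_{\gamma}',\|\cdot\|_{\F_{\gamma}'})^{\ast}$, and then verify the identity $\mathcal{I}(f)(\delta_{x})=\delta_{x}(f)=f(x)$. Your additional explicit remark that $\F_{\gamma}'$ is complete because it is $\|\cdot\|_{\F^{\ast}}$-closed in $\F^{\ast}$ (part (a) of the proposition) is a detail the paper leaves implicit, but it is the same argument.
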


\begin{rem}\label{rem:point_eval_in_predual}
Let $(\F,\|\cdot\|,\tau)$ be a Saks space of $\K$-valued functions on a non-empty set $\Omega$. 
If $\tau_{\operatorname{p}}\leq\tau$, then $\delta_{x}\in\F_{\gamma}'$ for all $x\in\Omega$ by 
\cite[Propositions 3.16, 3.17, p.~1602]{kruse2024} and \cite[Remark 4.3, p.~1609]{kruse2024}.
\end{rem}

Our next result tells us that our sufficient condition is also necessary. 
For its proof we only need to modify the proofs of \cite[Theorem 4.5, p.~1609]{kruse2024} and 
\cite[Corollary 4.8, p.~1611]{kruse2024}.

\begin{thm}\label{thm:existence_siB_linearisation}
Let $(\F,\|\cdot\|)$ be a Banach space of $\K$-valued functions on a non-empty set $\Omega$.
Then the following assertions are equivalent.
\begin{enumerate}
\item[(a)] $(\F,\|\cdot\|)$ admits a strong isometric Banach linearisation.
\item[(b)] $(\F,\|\cdot\|)$ has an isometric complete semi-Montel prebidual $(Y,\varphi)$ and for every $x\in\Omega$ 
there is a unique $y_{x}'\in Y'$ such that $\delta_{x}=\varphi(\cdot)(y_{x}')$.
\item[(c)] $(\F,\|\cdot\|)$ has an isometric prebidual $(Y,\varphi)$ and for every $x\in\Omega$ 
there is a unique $y_{x}'\in Y'$ such that $\delta_{x}=\varphi(\cdot)(y_{x}')$.
\item[(d)] $(\F,\|\cdot\|,\tau)$ is a semi-reflexive Saks space for some locally convex Hausdorff topology 
$\tau_{\operatorname{p}}\leq\tau\leq\tau_{\|\cdot\|}$.
\item[(e)] $(\F,\|\cdot\|,\tau)$ is a semi-Montel Saks space for some some locally convex Hausdorff topology 
$\tau_{\operatorname{p}}\leq\tau\leq\tau_{\|\cdot\|}$.
\item[(f)] $B_{\|\cdot\|}$ is $\tau$-compact for some some locally convex Hausdorff topology 
$\tau_{\operatorname{p}}\leq\tau$.
\item[(g)] $B_{\|\cdot\|}$ is $\tau$-compact for some locally convex Hausdorff topology $\tau$ on $\F$ 
such that $\delta_{x}\in \F_{\gamma}'$ for all $x\in\Omega$.
\item[(h)] $B_{\|\cdot\|}$ is $\tau_{\operatorname{p}}$-compact.
\end{enumerate}
\end{thm}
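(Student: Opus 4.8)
The plan is to establish all eight equivalences along the cycle
\[
\text{(a)}\Rightarrow\text{(h)}\Rightarrow\text{(f)}\Rightarrow\text{(e)}\Rightarrow\text{(d)}\Rightarrow\text{(a)},
\]
mirroring the structure of \prettyref{cor:existence_isom_banach_predual} but threading the extra condition $\tau_{\operatorname{p}}\leq\tau$ (equivalently, the membership of the point evaluations $\delta_{x}$ in the predual) through every step, and then to attach the remaining assertions by proving (e)$\Rightarrow$(b)$\Rightarrow$(c)$\Rightarrow$(a) and (f)$\Rightarrow$(g)$\Rightarrow$(a). The engine behind each implication that \emph{produces a linearisation} is \prettyref{prop:dixmier_ng_mixed} (b) together with \prettyref{cor:siB_linearisation}, while the engine behind each implication that \emph{produces a compact unit ball} is \prettyref{rem:semireflexive_pre_Saks_already_Saks} (c) and \prettyref{rem:pre_Saks_unit_ball_char} (d).

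The routine implications I would handle as follows. For (h)$\Rightarrow$(f) I take $\tau\coloneqq\tau_{\operatorname{p}}$. For (f)$\Rightarrow$(e) I invoke \prettyref{rem:semireflexive_pre_Saks_already_Saks} (c) to obtain $\tau\leq\tau_{\|\cdot\|}$ and that $(\F,\|\cdot\|,\tau)$ is a semi-Montel Saks space, retaining the bound $\tau_{\operatorname{p}}\leq\tau$; then (e)$\Rightarrow$(d) is immediate since semi-Montel spaces are semi-reflexive. For (f)$\Rightarrow$(g) I note $(\F,\|\cdot\|,\tau)$ is a Saks space with $\tau_{\operatorname{p}}\leq\tau$, so \prettyref{rem:point_eval_in_predual} gives $\delta_{x}\in\F_{\gamma}'$ for all $x$. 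For (e)$\Rightarrow$(b) I apply \prettyref{prop:dixmier_ng_mixed} (b) with $Y\coloneqq(\F,\gamma)$ and $\varphi\coloneqq\mathcal{I}$; here $(\F,\gamma)$ is semi-Montel, hence semi-reflexive, and complete by \prettyref{rem:semireflexive_pre_Saks_already_Saks} (b), so $((\F,\gamma),\mathcal{I})$ is an isometric complete semi-Montel prebidual, and setting $y_{x}'\coloneqq\delta_{x}\in\F_{\gamma}'=(\F,\gamma)'$ (again via \prettyref{rem:point_eval_in_predual}) yields $\delta_{x}=\mathcal{I}(\cdot)(y_{x}')$, since $\mathcal{I}(f)(y')=y'(f)$; uniqueness of $y_{x}'$ is immediate from this same identity. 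Finally (b)$\Rightarrow$(c) is trivial.

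The implications closing the cycle are where the substance lies. For (d)$\Rightarrow$(a) and (g)$\Rightarrow$(a) I use that $(\F,\|\cdot\|,\tau)$ is a semi-reflexive Saks space with $\delta_{x}\in\F_{\gamma}'$ for all $x$ (obtained from \prettyref{rem:point_eval_in_predual} in case (d), and directly in case (g) after noting via \prettyref{rem:semireflexive_pre_Saks_already_Saks} (c) that it is even semi-Montel), so that \prettyref{cor:siB_linearisation} furnishes the strong isometric Banach linearisation $(\Delta,\F_{\gamma}',\mathcal{I})$. For (c)$\Rightarrow$(a) I unpack \prettyref{defn:isom_prebidual}: then $((Y',\vertiii{\cdot}),\varphi)$ is an isometric Banach predual, and defining $\delta\colon\Omega\to(Y',\vertiii{\cdot})$ by $\delta(x)\coloneqq y_{x}'$ gives $\varphi(f)(\delta(x))=\varphi(f)(y_{x}')=\delta_{x}(f)=f(x)$, so that $(\delta,(Y',\vertiii{\cdot}),\varphi)$ is a strong isometric Banach linearisation.

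The step I expect to be the genuine obstacle is (a)$\Rightarrow$(h), since it is the only place where the function-space structure must be exploited in order to descend all the way to $\tau_{\operatorname{p}}$. From a strong isometric Banach linearisation $(\delta,Y,T)$ I obtain the isometric Banach predual $(Y,T)$, whose unit ball $B_{\|\cdot\|}$ is $\sigma_{T}(\F,Y)$-compact by \prettyref{rem:semireflexive_pre_Saks_already_Saks} (e). The key observation is that each evaluation seminorm $f\mapsto|f(x)|=|T(f)(\delta(x))|$ is one of the generating seminorms of $\sigma_{T}(\F,Y)$ precisely because $\delta(x)\in Y$, whence $\tau_{\operatorname{p}}\leq\sigma_{T}(\F,Y)$. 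Since $\tau_{\operatorname{p}}$ is Hausdorff on a space of functions, the identity map restricted to $B_{\|\cdot\|}$ is a continuous surjection from the $\sigma_{T}(\F,Y)$-compact ball onto the same set equipped with the coarser topology $\tau_{\operatorname{p}}$, forcing $B_{\|\cdot\|}$ to be $\tau_{\operatorname{p}}$-compact and establishing (h).
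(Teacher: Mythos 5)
Your proof is correct, and it runs on the same engine lemmas as the paper's own proof --- \prettyref{prop:dixmier_ng_mixed} (b), \prettyref{cor:siB_linearisation}, \prettyref{rem:point_eval_in_predual} and \prettyref{rem:semireflexive_pre_Saks_already_Saks} --- but your decomposition is genuinely different in one place. The paper's main cycle is (a)$\Rightarrow$(f)$\Rightarrow$(e)$\Rightarrow$(d)$\Rightarrow$(c)$\Rightarrow$(a), and it reaches (h) only afterwards via (f)$\Rightarrow$(h), which invokes \prettyref{rem:semireflexive_pre_Saks_already_Saks} (d), i.e.\ the fact that a locally convex Hausdorff topology making $B_{\|\cdot\|}$ compact agrees on $B_{\|\cdot\|}$ with every coarser Hausdorff topology. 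You instead run (a)$\Rightarrow$(h)$\Rightarrow$(f)$\Rightarrow$(e)$\Rightarrow$(d)$\Rightarrow$(a) and prove (a)$\Rightarrow$(h) in a single step. The pivot of your one-step argument --- that $\delta(x)\in Y$ makes $f\mapsto|f(x)|=|T(f)(\delta(x))|$ one of the generating seminorms of $\sigma_{T}(\F,Y)$, hence $\tau_{\operatorname{p}}\leq\sigma_{T}(\F,Y)$ --- is exactly the observation in the paper's proof of (a)$\Rightarrow$(f), but you then finish with the elementary fact that a continuous image of a compact set is compact, rather than detouring through (f) and \prettyref{rem:semireflexive_pre_Saks_already_Saks} (d). As a by-product you avoid the paper's citation of the density of the span of $\{\delta(x)\;|\;x\in\Omega\}$ in $Y$, which is indeed not needed for the inequality $\tau_{\operatorname{p}}\leq\sigma_{T}(\F,Y)$; your route is thus slightly more self-contained. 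The remaining deviation is cosmetic: you prove (d)$\Rightarrow$(a) directly from \prettyref{cor:siB_linearisation} and \prettyref{rem:point_eval_in_predual}, where the paper routes (d)$\Rightarrow$(c)$\Rightarrow$(a) so that the explicit construction $\delta(x)\coloneqq y_{x}'$ sits inside the main cycle; the mathematical content is identical, and your handling of (e)$\Rightarrow$(b), (b)$\Rightarrow$(c), (c)$\Rightarrow$(a), (f)$\Rightarrow$(g), (g)$\Rightarrow$(a) and (h)$\Rightarrow$(f) coincides with the paper's. Your graph of implications does connect all eight assertions, so nothing is missing.
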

\begin{proof}
(c)$\Rightarrow$(a) By the proof of \prettyref{cor:existence_isom_banach_predual} there is a norm $\vertiii{\cdot}$ on $Y'$ 
such that tuple $((Y',\vertiii{\cdot}),\varphi)$ is an isometric Banach predual of $(\F,\|\cdot\|)$. 
We set $\delta\colon\Omega\to Y'$, $\delta(x)\coloneqq y_{x}'$. Then we have 
\[
(\varphi(f)\circ\delta)(x)=\varphi(f)(y_{x}')=\delta_{x}(f)=f(x)
\]
for all $f\in\F$ and $x\in\Omega$. Hence $(\delta,(Y',\vertiii{\cdot}),\varphi)$ is a strong isometric Banach linearisation of $\F$. 

(a)$\Rightarrow$(f) As $(\F,\|\cdot\|)$ admits a strong isometric Banach linearisation, there are a Banach space $Y$, 
a map $\delta\colon\Omega\to Y$ and an isometric isomorphism $T\colon(\F,\|\cdot\|)\to Y^{\ast}$ such that $T(f)\circ\delta=f$ 
for all $f\in\F$. From the proof of \prettyref{cor:existence_isom_banach_predual} we obtain that $B_{\|\cdot\|}$ is 
$\sigma_{T}(\F,Y)$-compact and set $\tau\coloneqq \sigma_{T}(\F,Y)$.
Looking at $T(f)(\delta(x))=f(x)$ for all $f\in\F$ and $x\in\Omega$, we observe that $\tau_{\operatorname{p}}$ 
and $\sigma_{T}(\F,Y_{0})$ coincide 
on $\F$ where $Y_{0}$ denotes the span of $\{\delta(x)\;|\;x\in\Omega\}$ which is dense in $Y$ 
by \cite[Proposition 2.7, p.~1596]{kruse2024}, and $\sigma_{T}(\F,Y_{0})$ is defined 
by the system of seminorms 
\[
p_{N}(f)\coloneqq\sup_{y\in N}|T(f)(y)|,\quad f\in \F,
\]
for finite sets $N\subset Y_{0}$. Hence we have $\tau_{\operatorname{p}}=\sigma_{T}(\F,Y_{0})\leq\sigma_{T}(\F,Y)=\tau$.

(f)$\Rightarrow$(e), (e)$\Rightarrow$(d) These implications follow from the proof of \prettyref{cor:existence_isom_banach_predual}. 

(d)$\Rightarrow$(c) Since $\tau_{\operatorname{p}}\leq\tau$, we have $\delta_{x}\in \F_{\gamma}'$ for all $x\in\Omega$ 
by \prettyref{rem:point_eval_in_predual}. It follows from \prettyref{cor:siB_linearisation} with 
$Y\coloneqq (\F,\gamma)$ and $\varphi\coloneqq\mathcal{I}$ that $(Y,\varphi)$ is an isometric prebidual of $(\F,\|\cdot\|)$ 
and $\delta_{x}=\varphi(\cdot)(y_{x}')$ with $y_{x}'\coloneqq\delta_{x}\in Y'$. 
The uniqueness of $y_{x}'$ follows from \prettyref{rem:isom_predual_into_dual}. 

(e)$\Rightarrow$(b) Using that $(\F,\gamma)$ is a complete semi-Montel space by 
\prettyref{rem:semireflexive_pre_Saks_already_Saks} (b), 
this implication follows analogously to the proof of the implication (d)$\Rightarrow$(c).

(b)$\Rightarrow$(c) This implication is obvious.

(f)$\Rightarrow$(g) This implication follows from \prettyref{rem:point_eval_in_predual}.

(g)$\Rightarrow$(a) Since $B_{\|\cdot\|}$ is $\tau$-compact, we have that $(\F,\|\cdot\|,\tau)$ 
is a semi-Montel Saks space by \prettyref{rem:semireflexive_pre_Saks_already_Saks} (c). Hence assertion (a) follows from \prettyref{cor:siB_linearisation}.

(f)$\Rightarrow$(h) This implication follows from \prettyref{rem:semireflexive_pre_Saks_already_Saks} (d).

(h)$\Rightarrow$(f) This implication is obvious with $\tau\coloneqq \tau_{\operatorname{p}}$.
\end{proof}

\begin{exa}
For all complete semi-Montel Saks spaces $(\F,\|\cdot\|,\tau)$ 
from \prettyref{ex:subspace_cont_mixed} we have $\tau_{\operatorname{p}}\leq\tau$ and thus 
$(\Delta,\F_{\gamma}',\mathcal{I})$ is a strong isometric Banach linearisation of $\F$ in all cases 
by \prettyref{cor:siB_linearisation} and \prettyref{rem:point_eval_in_predual}.
\end{exa}

We note the following generalisation of \cite[Corollary 2.3, p.~642]{vargas2018}, where $\F=\operatorname{Lip}_{0}(\Omega)$, 
and \cite[8.~Corollary, p.~292]{prieto1992}, where $\F=H^{\infty}(\Omega)=\mathcal{H}w(\Omega)$ with $w(z)\coloneqq 1$ for $z\in\Omega$ 
and $\Omega\subset\C$ is an open connected set. 
If $(\F,\|\cdot\|,\tau)$ is a semi-reflexive Saks space such that $\tau_{\operatorname{p}}\leq\tau$, 
then $\gamma(\|\cdot\|,\tau_{\operatorname{p}})$ coincides with the \emph{bounded weak$^{\ast}$ topology} 
$b\sigma_{\mathcal{I}}(\F, \F_{\gamma}')$ of the dual pairing $\langle \F, \F_{\gamma}', \mathcal{I}\rangle$, 
i.e.~the finest locally convex Hausdorff topology on $\F$ which coincides with the topology $\sigma_{\mathcal{I}}(\F, \F_{\gamma}')$ 
on $\|\cdot\|$-bounded sets (see e.g.~\cite[p.~292]{prieto1992}).

\begin{prop}\label{prop:bounded_weak_star}
Let $(\F,\|\cdot\|,\tau)$ be a semi-reflexive Saks space of $\K$-valued functions on a non-empty set $\Omega$ such that $\tau_{\operatorname{p}}\leq\tau$ and $\gamma\coloneqq\gamma(\|\cdot\|,\tau)$. Then it holds that
\[
 \gamma(\|\cdot\|,\tau_{\operatorname{p}})
=\gamma(\|\cdot\|,\sigma_{\mathcal{I}}(\F,\F_{\gamma}'))
=b\sigma_{\mathcal{I}}(\F,\F_{\gamma}')
=\tau_{\operatorname{c},\mathcal{I}}(\F,\F_{\gamma}')
\]
where $\tau_{\operatorname{c},\mathcal{I}}(\F,\F_{\gamma}')$ denotes the locally convex Hausdorff topology on $\F$ that is induced 
by the system of seminorms 
\[
p_{K}(f)\coloneqq\sup_{f'\in K}|\mathcal{I}(f)(f')|=\sup_{f'\in K}|f'(f)|,\quad f\in\F,
\]
for $\|\cdot\|_{\F_{\gamma}'}$-compact sets $K\subset \F_{\gamma}'$. If $(\F,\|\cdot\|,\tau)$ is even semi-Montel, then 
$\gamma=\gamma(\|\cdot\|,\tau_{\operatorname{p}})$.
\end{prop}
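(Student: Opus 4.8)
The plan is to run the three equalities as a short chain: first reduce $\gamma(\|\cdot\|,\tau_{\operatorname{p}})$ to the weak-$\ast$ mixed topology, then identify that mixed topology simultaneously with the bounded weak-$\ast$ topology and with $\tau_{\operatorname{c},\mathcal{I}}(\F,\F_{\gamma}')$ by means of the Banach--Dieudonn\'e theorem, and finally read off the semi-Montel addendum from \prettyref{rem:semireflexive_pre_Saks_already_Saks} (d). For the setup, since $(\F,\|\cdot\|,\tau)$ is a semi-reflexive Saks space, \prettyref{prop:dixmier_ng_mixed} (b) gives that $\mathcal{I}\colon(\F,\|\cdot\|)\to(\F_{\gamma}',\|\cdot\|_{\F_{\gamma}'})^{\ast}$ is an isometric isomorphism, so $((\F_{\gamma}',\|\cdot\|_{\F_{\gamma}'}),\mathcal{I})$ is an isometric Banach predual of $\F$ and, by \prettyref{rem:semireflexive_pre_Saks_already_Saks} (e), the triple $(\F,\|\cdot\|,\sigma_{\mathcal{I}}(\F,\F_{\gamma}'))$ is a semi-Montel Saks space. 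Because $\tau_{\operatorname{p}}\leq\tau$, \prettyref{rem:point_eval_in_predual} yields $\delta_{x}\in\F_{\gamma}'$ for every $x\in\Omega$, and since $\mathcal{I}(f)(\delta_{x})=f(x)$ the topology $\tau_{\operatorname{p}}$ is induced by the subset $\{\delta_{x}:x\in\Omega\}$ of $\F_{\gamma}'$, whence $\tau_{\operatorname{p}}\leq\sigma_{\mathcal{I}}(\F,\F_{\gamma}')$. Applying \prettyref{rem:semireflexive_pre_Saks_already_Saks} (d) to the semi-Montel Saks space $(\F,\|\cdot\|,\sigma_{\mathcal{I}}(\F,\F_{\gamma}'))$ with the coarser locally convex Hausdorff topology $\tau_{\operatorname{p}}$ then gives the first equality $\gamma(\|\cdot\|,\tau_{\operatorname{p}})=\gamma(\|\cdot\|,\sigma_{\mathcal{I}}(\F,\F_{\gamma}'))$.

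The heart of the argument is the remaining pair of equalities, which I would obtain from the Banach--Dieudonn\'e theorem applied to the Banach space $\F_{\gamma}'$, whose dual $(\F_{\gamma}',\|\cdot\|_{\F_{\gamma}'})^{\ast}$ is identified with $\F$ via $\mathcal{I}$. This theorem describes $\tau_{\operatorname{c},\mathcal{I}}(\F,\F_{\gamma}')$, the topology of uniform convergence on the $\|\cdot\|_{\F_{\gamma}'}$-compact (equivalently precompact) subsets of $\F_{\gamma}'$, as the finest topology on $\F$ that coincides with $\sigma_{\mathcal{I}}(\F,\F_{\gamma}')$ on every equicontinuous subset of $\F$, and here the equicontinuous subsets are precisely the $\|\cdot\|$-bounded ones. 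As a topology of uniform convergence $\tau_{\operatorname{c},\mathcal{I}}$ is locally convex; being finest among all topologies agreeing with $\sigma_{\mathcal{I}}$ on $\|\cdot\|$-bounded sets and itself locally convex, it coincides with the finest locally convex such topology, namely $b\sigma_{\mathcal{I}}(\F,\F_{\gamma}')$. Moreover $\tau_{\operatorname{c},\mathcal{I}}$ is a linear topology with $\sigma_{\mathcal{I}}(\F,\F_{\gamma}')\leq\tau_{\operatorname{c},\mathcal{I}}\leq\tau_{\|\cdot\|}$ that coincides with $\sigma_{\mathcal{I}}$ on $\|\cdot\|$-bounded sets, so $\tau_{\operatorname{c},\mathcal{I}}\leq\gamma(\|\cdot\|,\sigma_{\mathcal{I}}(\F,\F_{\gamma}'))$ because the mixed topology is the finest linear topology with these properties; conversely the mixed topology is locally convex and agrees with $\sigma_{\mathcal{I}}$ on $\|\cdot\|$-bounded sets, so $\gamma(\|\cdot\|,\sigma_{\mathcal{I}}(\F,\F_{\gamma}'))\leq b\sigma_{\mathcal{I}}(\F,\F_{\gamma}')=\tau_{\operatorname{c},\mathcal{I}}$. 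Hence all three coincide, completing the chain.

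For the semi-Montel addendum, assuming $(\F,\|\cdot\|,\tau)$ is semi-Montel I would apply \prettyref{rem:semireflexive_pre_Saks_already_Saks} (d) directly to $(\F,\|\cdot\|,\tau)$ with the coarser topology $\tau_{\operatorname{p}}\leq\tau$ to conclude $\gamma=\gamma(\|\cdot\|,\tau)=\gamma(\|\cdot\|,\tau_{\operatorname{p}})$. The step I expect to be the main obstacle is the careful deployment of Banach--Dieudonn\'e: one has to match uniform convergence on norm-compact subsets of the predual with the finest topology coinciding with the weak-$\ast$ topology on equicontinuous sets, verify that the equicontinuous and the $\|\cdot\|$-bounded subsets of $\F$ coincide, and then play the ``finest linear'', ``finest locally convex'' and ``uniform convergence on compacts'' descriptions off against each other to force the three topologies to agree. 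By contrast, the identification of $\mathcal{I}$ and the two applications of \prettyref{rem:semireflexive_pre_Saks_already_Saks} (d) are routine once the setup is in place.
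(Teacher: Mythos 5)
Your argument is correct, and two of its three parts --- the reduction $\gamma(\|\cdot\|,\tau_{\operatorname{p}})=\gamma(\|\cdot\|,\sigma_{\mathcal{I}}(\F,\F_{\gamma}'))$ via \prettyref{rem:point_eval_in_predual} and \prettyref{rem:semireflexive_pre_Saks_already_Saks} (d), (e), and the semi-Montel addendum via \prettyref{rem:semireflexive_pre_Saks_already_Saks} (d) --- are exactly the paper's own steps. The genuine difference is the middle identification: the paper obtains $\gamma(\|\cdot\|,\sigma_{\mathcal{I}}(\F,\F_{\gamma}'))=b\sigma_{\mathcal{I}}(\F,\F_{\gamma}')$ straight from \prettyref{defn:mixed_top_Saks} (a), using that the mixed topology is locally convex and Hausdorff, and then cites Wiweger's Example E) for $b\sigma_{\mathcal{I}}(\F,\F_{\gamma}')=\tau_{\operatorname{c},\mathcal{I}}(\F,\F_{\gamma}')$, whereas you prove both equalities from the Banach--Dieudonn\'e theorem applied to the Banach space $\F_{\gamma}'$, whose dual is $\F$ via $\mathcal{I}$, by playing the ``finest'', ``finest locally convex Hausdorff'' and ``finest linear'' maximality properties off against one another. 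Your bookkeeping is sound: equicontinuous subsets of $(\F_{\gamma}',\|\cdot\|_{\F_{\gamma}'})^{\ast}$ correspond to $\|\cdot\|$-bounded subsets of $\F$ under the isometry $\mathcal{I}$; precompact and compact convergence agree because $\F_{\gamma}'$ is complete (it is $\|\cdot\|_{\F^{\ast}}$-closed in $\F^{\ast}$ by \prettyref{prop:dixmier_ng_mixed} (a)); and the four inequalities you list do force $\gamma(\|\cdot\|,\sigma_{\mathcal{I}}(\F,\F_{\gamma}'))$, $b\sigma_{\mathcal{I}}(\F,\F_{\gamma}')$ and $\tau_{\operatorname{c},\mathcal{I}}(\F,\F_{\gamma}')$ to coincide. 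What the paper's route buys is brevity, outsourcing the analytic content to Wiweger's mixed-topology result; what yours buys is self-containedness modulo the classical strong form of Banach--Dieudonn\'e (the \emph{finest topology}, not merely the finest locally convex one, agreeing with the weak-$\ast$ topology on equicontinuous sets equals the topology of precompact convergence), which is precisely the form you need for the inequality $b\sigma_{\mathcal{I}}(\F,\F_{\gamma}')\leq\tau_{\operatorname{c},\mathcal{I}}(\F,\F_{\gamma}')$.
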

\begin{proof}
Since $\delta_{x}\in\F_{\gamma}'$ for all $x\in\Omega$ by \prettyref{rem:point_eval_in_predual}, we get $\tau_{\operatorname{p}}\leq\sigma_{\mathcal{I}}(\F,\F_{\gamma}')$. 
Due to \prettyref{rem:semireflexive_pre_Saks_already_Saks} (e) $(\F,\|\cdot\|,\sigma_{\mathcal{I}}(\F,\F_{\gamma}'))$ 
is a semi-Montel Saks space and $\gamma(\|\cdot\|,\tau_{\operatorname{p}})=\gamma(\|\cdot\|,\sigma_{\mathcal{I}}(\F,\F_{\gamma}'))$ 
by \prettyref{rem:semireflexive_pre_Saks_already_Saks} (d). Further, we have
\[
 \gamma(\|\cdot\|,\sigma_{\mathcal{I}}(\F,\F_{\gamma}'))
=b\sigma_{\mathcal{I}}(\F,\F_{\gamma}')
=\tau_{\operatorname{c},\mathcal{I}}(\F,\F_{\gamma}')
\]
by \cite[Example E), p.~66]{wiweger1961} for the second equation and \prettyref{defn:mixed_top_Saks} (a) for the first equation 
since the mixed topology $\gamma(\|\cdot\|,\sigma_{\mathcal{I}}(\F,\F_{\gamma}'))$ is a locally convex Hausdorff topology. 
If $(\F,\|\cdot\|,\tau)$ is even semi-Montel, then 
$\gamma=\gamma(\|\cdot\|,\tau_{\operatorname{p}})$ by \prettyref{rem:semireflexive_pre_Saks_already_Saks} (d).
\end{proof}

There is also a vector-valued version of \prettyref{cor:siB_linearisation} for which we need a weak vector-valued version 
of the function space $\F$. We recall its definition from \cite[p.~963]{kruse2025}. 
Let $(\F,\|\cdot\|)$ be a normed space of $\K$-valued functions on a non-empty set $\Omega$. 
For a normed space $E$ over the field $\K$ we define the space
\[
\FE_{\sigma}\coloneqq\{f\colon \Omega\to E\;|\;\forall\;e^{\ast}\in E^{\ast}:\;e^{\ast}\circ f\in\F\}.
\]
Further, we set
\[
\|f\|_{\sigma}\coloneqq \sup_{e^{\ast}\in B_{\|\cdot\|_{E^{\ast}}}}\|e^{\ast}\circ f\|,\quad f\in\FE_{\sigma}.
\]
If $(\F,\|\cdot\|)$ is a Banach space such that $\tau_{\operatorname{p}}\leq\tau_{\|\cdot\|}$, then $\|f\|_{\sigma}<\infty$ for all $f\in\FE_{\sigma}$ 
by \cite[Remark 4.1, p.~964]{kruse2025} and thus $(\FE_{\sigma},\|\cdot\|_{\sigma})$ is a normed space.

Now, let $(\F,\|\cdot\|,\tau)$ be a semi-reflexive Saks space of $\K$-valued functions on a non-empty set $\Omega$ such that $\tau_{\operatorname{p}}\leq\tau$ and $E$ a Banach space over the field $\K$. Then the map 
\begin{equation}\label{eq:si_lin_vec_valued}
\chi\colon L(\F_{\gamma}',E)\to\FE_{\sigma},\;\chi(u)\coloneqq u\circ \Delta,
\end{equation}
is an isometric isomorphism w.r.t.~the norms $\|\cdot\|_{L(\F_{\gamma}',E)}$ and $\|\cdot\|_{\sigma}$ 
by \prettyref{cor:siB_linearisation}, \prettyref{rem:point_eval_in_predual}, 
\cite[Remark 4.4, p.~964]{kruse2025} and \cite[Theorem 4.5, p.~965]{kruse2025}. 
We may use this vector-valued result in connection to (weakly) compact composition operators. 
Let $\varphi\colon\Omega\to\Omega$ be such that $f\circ\varphi\in\F$ for all $f\in\F$. 
Then the composition operators $C_{\varphi}\colon\F\to\F$ and $C_{\varphi}\colon\FE_{\sigma}\to\FE_{\sigma}$ 
given by $C_{\varphi}(f)\coloneqq f\circ\varphi$ are well-defined linear maps since $e^{\ast}\circ f\in\F$ for every 
$e^{\ast}\in E^{\ast}$ and $f\in\FE_{\sigma}$. 
We recall that a linear map $A\colon X\to Y$ between two Banach spaces $(X,\|\cdot\|_{X})$ and $(Y\|\cdot\|_{Y})$ 
is called \emph{compact} if $A(B_{\|\cdot\|_{X}})$ is relatively $\|\cdot\|_{Y}$-compact. 
A linear map $A\colon X\to Y$ is called \emph{weakly compact} if $A(B_{\|\cdot\|_{X}})$ is relatively $\sigma(Y,Y^{\ast})$-compact 
(see e.g.~\cite[p.~235]{bonet2001}). The following theorem generalises \cite[Proposition 11, p.~244]{bonet2001} and 
\cite[Theorem 5.3, p.~14]{laitila2006}. Its proof is based on the one of \cite[Proposition 11, p.~244]{bonet2001}.

\begin{thm}\label{thm:compact_operator}
Let $(\F,\|\cdot\|,\tau)$ be a semi-reflexive Saks space of $\K$-valued functions on a non-empty set $\Omega$ such that $\tau_{\operatorname{p}}\leq\tau$, $\varphi\colon\Omega\to\Omega$ such that 
$f\circ\varphi\in\F$ for all $f\in\F$ and $E$ a Banach space over the field $\K$. 
\begin{enumerate}
\item If $C_{\varphi}\in L(\F)$, then $C_{\varphi}\in L(\FE_{\sigma})$. 
\item If $C_{\varphi}\colon\F\to\F$ is compact and $E$ reflexive, 
then $C_{\varphi}\colon\FE_{\sigma}\to\FE_{\sigma}$ is weakly compact. 
\end{enumerate}
\end{thm}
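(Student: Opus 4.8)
Both assertions reduce to the scalar composition operator. For \emph{part (a)} the well-definedness is already noted, so only boundedness remains: for $f\in\FE_{\sigma}$ and $e^{\ast}\in E^{\ast}$ one has $e^{\ast}\circ(f\circ\varphi)=(e^{\ast}\circ f)\circ\varphi=C_{\varphi}(e^{\ast}\circ f)$, whence
\[
\|C_{\varphi}f\|_{\sigma}=\sup_{e^{\ast}\in B_{\|\cdot\|_{E^{\ast}}}}\|C_{\varphi}(e^{\ast}\circ f)\|\leq\|C_{\varphi}\|_{L(\F)}\|f\|_{\sigma},
\]
so $C_{\varphi}\in L(\FE_{\sigma})$ with operator norm at most $\|C_{\varphi}\|_{L(\F)}$.

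For \emph{part (b)} the plan is to transport $C_{\varphi}$ through the isometric isomorphism $\chi\colon L(\F_{\gamma}',E)\to\FE_{\sigma}$, $\chi(u)=u\circ\Delta$, from \eqref{eq:si_lin_vec_valued}. First I would introduce $S\coloneqq C_{\varphi}^{t}|_{\F_{\gamma}'}$: since $C_{\varphi}^{t}(\delta_{x})=\delta_{\varphi(x)}\in\F_{\gamma}'$ and the span of $\{\delta_{x}\;|\;x\in\Omega\}$ is dense in $\F_{\gamma}'$ by \cite[2.17 Proposition, p.~9]{kruse2023a}, while $\F_{\gamma}'$ is $\|\cdot\|_{\F^{\ast}}$-closed by \prettyref{prop:dixmier_ng_mixed} (a), the continuous map $C_{\varphi}^{t}$ sends $\F_{\gamma}'$ into itself, so $S\in L(\F_{\gamma}')$ with $S\delta_{x}=\delta_{\varphi(x)}$. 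Evaluating on the dense span then yields $C_{\varphi}\circ\chi=\chi\circ R_{S}$, where $R_{S}\colon L(\F_{\gamma}',E)\to L(\F_{\gamma}',E)$, $R_{S}(u)\coloneqq u\circ S$. As $\chi$ is an isometric isomorphism, it suffices to prove that $R_{S}$ is weakly compact.

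Next I would record that $S$ is compact: evaluating on the dense span shows $\mathcal{I}\circ C_{\varphi}=S^{t}\circ\mathcal{I}$, so $S^{t}=\mathcal{I}\circ C_{\varphi}\circ\mathcal{I}^{-1}$ is compact because $C_{\varphi}$ is, and Schauder's theorem gives compactness of $S$ itself. Now I would use reflexivity of $E$ to identify $L(\F_{\gamma}',E)$ isometrically with the dual space $(\F_{\gamma}'\widehat{\otimes}_{\pi}E^{\ast})^{\ast}$, under which $R_{S}$ becomes the dual map of $T\coloneqq S\otimes\id_{E^{\ast}}\colon\F_{\gamma}'\widehat{\otimes}_{\pi}E^{\ast}\to\F_{\gamma}'\widehat{\otimes}_{\pi}E^{\ast}$; indeed $\langle R_{S}u,z\otimes e^{\ast}\rangle=e^{\ast}(u(Sz))=\langle u,T(z\otimes e^{\ast})\rangle$ on elementary tensors. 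By Gantmacher's theorem $R_{S}=T^{t}$ is weakly compact if and only if $T$ is, so the task becomes showing $T$ weakly compact.

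This last step is the heart of the argument and the main obstacle. Writing $K\coloneqq\oacx(S(B_{\|\cdot\|_{\F_{\gamma}'}}))$, which is norm-compact because $S$ is compact, the image $T(B_{\F_{\gamma}'\widehat{\otimes}_{\pi}E^{\ast}})$ is contained in $\oacx\bigl(\{s\otimes e^{\ast}\;|\;s\in K,\ e^{\ast}\in B_{\|\cdot\|_{E^{\ast}}}\}\bigr)$, since the unit ball of the projective tensor product is the closed absolutely convex hull of $\{z\otimes e^{\ast}\;|\;\|z\|\leq1,\ \|e^{\ast}\|\leq1\}$. The plan is to show that $\{s\otimes e^{\ast}\;|\;s\in K,\ e^{\ast}\in B_{\|\cdot\|_{E^{\ast}}}\}$ is already $\sigma(\F_{\gamma}'\widehat{\otimes}_{\pi}E^{\ast},L(\F_{\gamma}',E))$-compact and then to invoke the Krein--\v{S}mulian theorem, so that its closed absolutely convex hull is weakly compact too; as the image of the unit ball sits inside this hull, $T$ is weakly compact. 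For the weak compactness of that set I would equip $K\times B_{\|\cdot\|_{E^{\ast}}}$ with the product of the norm topology and $\sigma(E^{\ast},E)$, which is compact by Banach--Alaoglu, and verify that $(s,e^{\ast})\mapsto s\otimes e^{\ast}$ is continuous into $\sigma(\F_{\gamma}'\widehat{\otimes}_{\pi}E^{\ast},L(\F_{\gamma}',E))$: testing against $A\in L(\F_{\gamma}',E)$ gives $(s,e^{\ast})\mapsto e^{\ast}(A(s))$, which is norm-continuous in $s$ uniformly in $e^{\ast}$ and $\sigma(E^{\ast},E)$-continuous in $e^{\ast}$, hence jointly continuous, so the continuous image of the compact set is weakly compact. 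The delicate point throughout is this passage from norm-compactness of $K$ together with reflexivity of $E$ to genuine weak compactness \emph{inside} the projective tensor product: Krein--\v{S}mulian is what tames the convex hull, and it is the joint (not merely separate) continuity of the tensor map into the weak topology that has to be checked with care.
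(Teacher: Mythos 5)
Your part (a) is exactly the paper's argument, and the skeleton of your part (b) also coincides with the paper's: the invariance of $\F_{\gamma}'$ under $C_{\varphi}^{t}$ via $C_{\varphi}^{t}(\delta_{x})=\delta_{\varphi(x)}$ and density of the span of the point evaluations, the compactness of $S=(C_{\varphi}^{t})_{\mid \F_{\gamma}'}$ via Schauder's theorem (the paper restricts the compact operator $C_{\varphi}^{t}$ to the invariant closed subspace, you instead conjugate by $\mathcal{I}$ and then apply Schauder; both are valid), and the conjugation identity $C_{\varphi}^{E}=\chi\circ R_{S}\circ\chi^{-1}$, where your $R_{S}$ is precisely the paper's $W_{\varphi}$ (the factor $\id_{E}$ there is redundant as a map). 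The genuine difference lies in the key analytic step. The paper disposes of it by citing \cite[Theorem 2.9, p.~100]{saksman1992}: the map $u\mapsto \id_{E}\circ u\circ S$ is weakly compact because $\id_{E}$ is weakly compact (reflexivity of $E$) and $S$ is compact. You instead reprove the special case you need: identify $L(\F_{\gamma}',E)\cong(\F_{\gamma}'\widehat{\otimes}_{\pi}E^{\ast})^{\ast}$ (this is where reflexivity enters for you), recognise $R_{S}$ as the adjoint of $S\otimes\id_{E^{\ast}}$, prove the latter weakly compact by combining the norm-compactness of $K=\oacx(S(B_{\|\cdot\|_{\F_{\gamma}'}}))$, the description of the unit ball of the projective tensor product as $\oacx\{z\otimes e^{\ast}\,|\,\|z\|\leq 1,\|e^{\ast}\|\leq 1\}$, the joint continuity of $(s,e^{\ast})\mapsto s\otimes e^{\ast}$ from (norm)$\times$(weak-$\ast$) into the weak topology, and the Krein--\v{S}mulian theorem (your set is balanced since $K$ is, so the absolutely convex hull causes no trouble), and finally transfer back through Gantmacher's theorem. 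I checked these steps --- the duality pairing, the inclusion $T(B)\subset\oacx\{s\otimes e^{\ast}\,|\,s\in K,\,e^{\ast}\in B_{\|\cdot\|_{E^{\ast}}}\}$, and the compactness of the continuous image of $K\times B_{\|\cdot\|_{E^{\ast}}}$ --- and they are all correct. What each approach buys: the paper's is shorter but rests on an external operator-theoretic result of Saksman and Tylli, while yours is self-contained modulo the classical theorems of Gantmacher and Krein--\v{S}mulian, and it makes transparent exactly where the reflexivity of $E$ and the compactness of $C_{\varphi}$ are used; in effect you have supplied a direct proof of the needed instance of the cited result.
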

\begin{proof}
(a) To distinguish the two composition operators we denote the one on $\FE_{\sigma}$ by $C_{\varphi}^{E}$. 
We note that 
\[
 \|C_{\varphi}^{E}(f)\|_{\sigma}
=\sup_{e^{\ast}\in B_{\|\cdot\|_{E^{\ast}}}}\|e^{\ast}\circ (f\circ \varphi)\|
= \sup_{e^{\ast}\in B_{\|\cdot\|_{E^{\ast}}}} \|C_{\varphi}(e^{\ast}\circ f)\|
\leq\|C_{\varphi}\|_{L(\F)}\|f\|_{\sigma}
\]
for all $f\in\FE_{\sigma}$, which implies the continuity of $C_{\varphi}^{E}$.

(b) First, we observe that the dual map $C_{\varphi}^{t}\colon \F^{\ast}\to\F^{\ast}$ leaves the $\|\cdot\|_{\F^{\ast}}$-closed 
linear subspace $\F_{\gamma}'$ invariant (see \prettyref{prop:dixmier_ng_mixed} (a)). Indeed, 
we have $C_{\varphi}^{t}(\delta_{x})=\delta_{\varphi(x)}$ for all $x\in\Omega$. Since the span of 
$\{\delta_{x}\;|\;x\in\Omega\}$ is $\|\cdot\|_{\F^{\ast}}$-dense in $\F_{\gamma}'$ by \prettyref{cor:siB_linearisation}, 
\prettyref{rem:point_eval_in_predual} and \cite[Proposition 2.7, p.~1596]{kruse2024}, we obtain that $C_{\varphi}^{t}$ leaves 
$\F_{\gamma}'$ invariant. Second, we claim that 
\[
C_{\varphi}^{E}=\chi\circ W_{\varphi}\circ \chi^{-1}
\]
where $\chi$ is the isometric isomorphism from \eqref{eq:si_lin_vec_valued} and 
\[
W_{\varphi}\colon L(\F_{\gamma}',E)\to L(\F_{\gamma}',E),\;u\mapsto \id_{E}\circ u\circ (C_{\varphi}^{t})_{\mid \F_{\gamma}'},
\]
with the identity map $\id_{E}$ on $E$. Indeed, we have by \eqref{eq:si_lin_vec_valued} that
\begin{align*}
  (\chi\circ W_{\varphi}\circ \chi^{-1})(f)(x)
&=\chi\bigl(\chi^{-1}(f)\circ (C_{\varphi}^{t})_{\mid \F_{\gamma}'}\bigr)(x)
 =\bigl(\chi^{-1}(f)\circ (C_{\varphi}^{t})_{\mid \F_{\gamma}'}\bigr)(\delta_{x})\\
&=\chi^{-1}(f)(\delta_{\varphi(x)})
 =\chi(\chi^{-1}(f))(\varphi(x))
 =f(\varphi(x))
 =C_{\varphi}^{E}(f)(x)
\end{align*}
for all $f\in\FE_{\sigma}$ and $x\in\Omega$. Since $\id_{E}$ is weakly compact 
by \cite[Proposition 23.25, p.~272]{meisevogt1997} due to the reflexivity of $E$ 
and $(C_{\varphi}^{t})_{\mid \F_{\gamma}'}$ is compact as it is the restriction to an invariant closed subspace of $\F^{\ast}$
of the compact operator $C_{\varphi}^{t}$ by \cite[Schauder's theorem 15.3, p.~141]{meisevogt1997}, 
we get that $W_{\varphi}$ is weakly compact by \cite[Theorem 2.9, p.~100]{saksman1992}. 
We conclude that $C_{\varphi}^{E}=\chi\circ W_{\varphi}\circ \chi^{-1}$ is weakly compact. 
\end{proof}

Let $\varphi\colon\D\to\D$ be holomorphic and $\mathcal{B}_{\alpha}\coloneqq\mathcal{B}v_{\alpha}(\D)$ 
with $v_{\alpha}(z)\coloneqq (1-|z|^{2})^{\alpha}$ for all $z\in\D$ and some $\alpha>0$. 
Necessary and sufficient conditions such that $C_{\varphi}\in L(\mathcal{B}_{\alpha})$ resp.~$C_{\varphi}$ is compact 
are given in \cite[Theorems 2.1, 3.1, p.~193, 198--199]{ohno2003}.

\section{Isometric equivalence and uniqueness of isometric preduals}
\label{sect:isom_equivalence_predual}

First, we prove that the isometric Banach preduals coming from two strong isometric Banach linearisations of the same Banach space 
$\F$ are isometrically equivalent. Second, we show how to avoid the assumption on linear independence 
in \prettyref{prop:strongly_unique_isom_predual} (b) and \prettyref{cor:unique_isom_predual} (b).

\begin{thm}\label{thm:isometric_sB_linearisation_in_equivalence_class}
Let $(\F,\|\cdot\|,\tau)$ be a semi-reflexive Saks space of $\K$-valued functions on a non-empty set $\Omega$ such that 
$\tau_{\operatorname{p}}\leq\tau$ and $(Z,\widetilde{T})$ an isometric Banach predual of $(\F,\|\cdot\|)$. 
Then the following assertions are equivalent.
\begin{enumerate}
\item[(a)] There exists $\widetilde{\delta}\in\mathcal{F}(\Omega,Z)_{\sigma}$ such that $(\widetilde{\delta},Z,\widetilde{T})$ 
is a strong isometric Banach linearisation of $\F$.
\item[(b)] $(Z,\widetilde{T})$ and $(\F_{\gamma}',\mathcal{I})$ are isometrically equivalent. 
\end{enumerate}
\end{thm}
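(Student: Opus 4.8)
The plan is to realise the isometric equivalence concretely through the correspondence $\delta_{x}\leftrightarrow\widetilde{\delta}(x)$ between the two canonical families generating the preduals. Throughout I use that $\tau_{\operatorname{p}}\leq\tau$ forces $\delta_{x}\in\F_{\gamma}'$ for all $x$ (see \prettyref{rem:point_eval_in_predual}), so that $(\Delta,\F_{\gamma}',\mathcal{I})$ with $\Delta(x)=\delta_{x}$ is a strong isometric Banach linearisation by \prettyref{cor:siB_linearisation}; in particular $(\F_{\gamma}',\mathcal{I})$ is an isometric Banach predual of $\F$, and $\F_{\gamma}'$ is complete by \prettyref{prop:dixmier_ng_mixed} (a). The two facts I will lean on repeatedly are: (i) for any strong isometric Banach linearisation $(\delta,Y,T)$ the span of $\{\delta(x)\mid x\in\Omega\}$ is $\|\cdot\|_{Y}$-dense in $Y$ (this is \cite[2.17 Proposition, p.~9]{kruse2023a}, as already used in the proof of \prettyref{thm:existence_siB_linearisation}); and (ii) an isometric isomorphism onto a dual space maps the closed unit ball of its domain onto the unit ball of the dual, so that by Hahn--Banach, for an isometric isomorphism $S\colon\F\to W^{\ast}$ one has $\|w\|_{W}=\sup_{f\in B_{\|\cdot\|}}|S(f)(w)|$ for every $w\in W$.

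For the implication (b)$\Rightarrow$(a), which is the quicker one, suppose there is an isometric isomorphism $\lambda\colon\F_{\gamma}'\to Z$ with $\lambda^{t}=\mathcal{I}\circ\widetilde{T}^{-1}$ (the equivalence relation of \prettyref{defn:isom_predual_equivalent}(b) is symmetric, so I may take this direction). I would set $\widetilde{\delta}(x)\coloneqq\lambda(\delta_{x})$. Then $\lambda^{t}\circ\widetilde{T}=\mathcal{I}$ gives, for $f\in\F$ and $x\in\Omega$,
\[
\widetilde{T}(f)(\widetilde{\delta}(x))=\widetilde{T}(f)(\lambda(\delta_{x}))=\lambda^{t}(\widetilde{T}(f))(\delta_{x})=\mathcal{I}(f)(\delta_{x})=\delta_{x}(f)=f(x),
\]
which is precisely the linearisation identity $\widetilde{T}(f)\circ\widetilde{\delta}=f$. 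Since every $z^{\ast}\in Z^{\ast}$ equals $\widetilde{T}(f)$ for a unique $f\in\F$, the same identity yields $z^{\ast}\circ\widetilde{\delta}=f\in\F$, so $\widetilde{\delta}\in\mathcal{F}(\Omega,Z)_{\sigma}$, establishing (a).

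For (a)$\Rightarrow$(b), the heart is a single norm identity. Given a strong isometric Banach linearisation $(\widetilde{\delta},Z,\widetilde{T})$, I would compute, for every finite combination, that both
\[
\Bigl\|\sum_{i}c_{i}\delta_{x_{i}}\Bigr\|_{\F_{\gamma}'}\quad\text{and}\quad\Bigl\|\sum_{i}c_{i}\widetilde{\delta}(x_{i})\Bigr\|_{Z}
\]
equal $\sup_{f\in B_{\|\cdot\|}}\bigl|\sum_{i}c_{i}f(x_{i})\bigr|$: the first because $\|\cdot\|_{\F_{\gamma}'}$ is the restriction of the operator norm and $\delta_{x_{i}}(f)=f(x_{i})$, the second by fact (ii) applied to $\widetilde{T}$ together with $\widetilde{T}(f)(\widetilde{\delta}(x_{i}))=f(x_{i})$. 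This identity simultaneously yields well-definedness and isometry of the linear map $\lambda_{0}\colon\operatorname{span}\{\delta_{x}\}\to Z$, $\delta_{x}\mapsto\widetilde{\delta}(x)$, on a dense subspace of $\F_{\gamma}'$ by fact (i). Extending by continuity to the Banach space $\F_{\gamma}'$ gives a linear isometry $\lambda$; its range is closed and contains the span of $\{\widetilde{\delta}(x)\}$, which is dense in $Z$ by fact (i) for $(\widetilde{\delta},Z,\widetilde{T})$, hence $\lambda$ is an isometric isomorphism. Finally I would verify $\lambda^{t}=\mathcal{I}\circ\widetilde{T}^{-1}$ by checking both functionals on the dense span $\operatorname{span}\{\delta_{x}\}$: for $f\in\F$ one has $\lambda^{t}(\widetilde{T}(f))(\delta_{x})=\widetilde{T}(f)(\widetilde{\delta}(x))=f(x)=\mathcal{I}(f)(\delta_{x})$, and continuity closes the gap. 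This is exactly the isometric equivalence of $(\F_{\gamma}',\mathcal{I})$ and $(Z,\widetilde{T})$.

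The main obstacle, and the only genuinely non-formal point, is the well-definedness of $\lambda_{0}$: the family $(\delta_{x})_{x\in\Omega}$ need not be linearly independent, so the assignment $\delta_{x}\mapsto\widetilde{\delta}(x)$ is not obviously consistent. The norm identity resolves this at once, since a vanishing combination $\sum_i c_i\delta_{x_i}$ has zero norm, forcing $\sup_{f\in B_{\|\cdot\|}}|\sum_i c_i f(x_i)|=0$ and hence $\sum_i c_i\widetilde{\delta}(x_i)=0$; this is precisely why I arrange the computation so that both norms are read off the single quantity $\sup_{f\in B_{\|\cdot\|}}|\sum_i c_i f(x_i)|$. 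Everything else (density, extension by continuity, surjectivity from closed range plus dense image, and the transpose identity) is routine once this is in place; should one prefer, \prettyref{rem:dual_isom_map_isom} could be invoked to upgrade a mere topological isomorphism to an isometric one, but here $\lambda$ is manifestly isometric from the outset.
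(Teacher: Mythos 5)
Your proof is correct, and it takes a genuinely different, self-contained route. The paper settles both implications by citation to its non-isometric companion paper: (a)$\Rightarrow$(b) is reduced to \prettyref{cor:siB_linearisation}, \prettyref{rem:point_eval_in_predual} and \cite[6.1 Proposition (a), 6.2 Proposition]{kruse2023a}, and (b)$\Rightarrow$(a) to \cite[6.3 Proposition]{kruse2023a}; since those external results are formulated in the non-isometric setting, the isometric conclusion there rests on the upgrade mechanism of \prettyref{prop:equiv_implies_isom_equiv} and \prettyref{rem:dual_isom_map_isom}. You instead reconstruct the whole argument directly in the isometric setting: the pivot is the norm identity
\[
\Bigl\|\sum_{i}c_{i}\delta_{x_{i}}\Bigr\|_{\F_{\gamma}'}
=\sup_{f\in B_{\|\cdot\|}}\Bigl|\sum_{i}c_{i}f(x_{i})\Bigr|
=\Bigl\|\sum_{i}c_{i}\widetilde{\delta}(x_{i})\Bigr\|_{Z},
\]
whose right-hand equality is exactly the Hahn--Banach computation underlying \prettyref{rem:dual_isom_map_isom}. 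This identity makes $\delta_{x}\mapsto\widetilde{\delta}(x)$ well defined and isometric on $\operatorname{span}\{\delta_{x}\;|\;x\in\Omega\}$ despite the possible linear dependence of the point evaluations (the one non-formal point, which you correctly isolate), and density of the two spans via \cite[2.17 Proposition, p.~9]{kruse2023a} --- a fact the paper itself invokes in the proofs of \prettyref{thm:existence_siB_linearisation} and \prettyref{thm:compact_operator}, so it is legitimately available to you --- yields, after extension by continuity, an isometric isomorphism $\lambda\colon\F_{\gamma}'\to Z$ with $\lambda^{t}=\mathcal{I}\circ\widetilde{T}^{-1}$. Your converse direction, setting $\widetilde{\delta}(x)\coloneqq\lambda(\delta_{x})$, is the natural inverse construction and is equally sound, including the observation that $\widetilde{\delta}\in\mathcal{F}(\Omega,Z)_{\sigma}$ comes for free because every $z^{\ast}\in Z^{\ast}$ has the form $\widetilde{T}(f)$; your tacit use of the symmetry of isometric equivalence is justified since $(\lambda^{-1})^{t}=(\lambda^{t})^{-1}$. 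What the paper's route buys is brevity and reuse of the companion paper's machinery; what yours buys is a proof readable without \cite{kruse2023a} (except for the density fact), in which the isomorphism is isometric by construction, so no topological-to-isometric upgrade is needed.
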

\begin{proof}
(a)$\Rightarrow$(b) Due to \prettyref{cor:siB_linearisation}, \prettyref{rem:point_eval_in_predual}, 
\cite[Proposition 5.1 (a), p.~968]{kruse2025} and \cite[Proposition 5.2, p.~696]{kruse2025} we have that 
$(Z,\widetilde{T})$ and $(\F_{\gamma}',\mathcal{I})$ are isometrically equivalent.

(b)$\Rightarrow$(a) This implication follows from \cite[Proposition 5.3, p.~970]{kruse2025}.
\end{proof}

For the proof of our next two results we only need to adjust the proofs of \cite[Corollaries 5.10, 5.11, p.~973]{kruse2025} 
to the isometric setting.

\begin{cor}\label{cor:strongly_unique_isom_predual_without_lin_ind}
Let $(\F,\|\cdot\|,\tau)$ be a semi-reflexive Saks space of $\K$-valued functions on a non-empty set $\Omega$ such that 
$\tau_{\operatorname{p}}\leq\tau$. 
Then the following assertions are equivalent. 
\begin{enumerate}
\item[(a)] $(\F,\|\cdot\|)$ has a strongly unique isometric Banach predual. 
\item[(b)] For every isometric Banach predual $(Z,\varphi)$ of $(\F,\|\cdot\|)$ and every $x\in\Omega$ there is a (unique) $z_{x}\in Z$ with $\delta_{x}=\varphi(\cdot)(z_{x})$.
\end{enumerate}
\end{cor}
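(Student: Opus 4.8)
The plan is to leverage the distinguished strong isometric Banach linearisation coming from the mixed topology together with \prettyref{thm:isometric_sB_linearisation_in_equivalence_class}, so as to circumvent the linear independence hypothesis in \prettyref{prop:strongly_unique_isom_predual}. First I would record the standing setup: since $(\F,\|\cdot\|,\tau)$ is a semi-reflexive Saks space with $\tau_{\operatorname{p}}\leq\tau$, \prettyref{rem:point_eval_in_predual} gives $\delta_{x}\in\F_{\gamma}'$ for all $x\in\Omega$, so by \prettyref{cor:siB_linearisation} the triple $(\Delta,\F_{\gamma}',\mathcal{I})$ with $\Delta(x)=\delta_{x}$ is a strong isometric Banach linearisation of $\F$, and $\mathcal{I}(\cdot)(\Delta(x))=\delta_{x}$. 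The implication (a)$\Rightarrow$(b) is then immediate: applying \prettyref{prop:strongly_unique_isom_predual} to this linearisation, its assertion (b) reads exactly as assertion (b) here, with uniqueness of $z_{x}$ coming from the injectivity of $\Phi_{\varphi}$ in \prettyref{rem:isom_predual_into_dual}.

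For (b)$\Rightarrow$(a), the obstacle is that \prettyref{prop:strongly_unique_isom_predual} only delivers (b)$\Rightarrow$(a) when $(\delta_{x})_{x\in\Omega}$ is linearly independent in $\F_{\gamma}'$, which we do not assume. I would instead route through \prettyref{prop:isom_strongly_unique_equivalent}, reducing (a) to showing that all isometric Banach preduals of $\F$ are isometrically equivalent. Since isometric equivalence is an equivalence relation (symmetry and transitivity follow directly from \prettyref{defn:isom_predual_equivalent}(b) by passing to $\lambda^{-1}$ and composing), it suffices to show that an arbitrary isometric Banach predual $(Z,\varphi)$ is isometrically equivalent to the fixed anchor $(\F_{\gamma}',\mathcal{I})$; transitivity then links any two preduals.

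To produce this equivalence I would use assertion (b): for each $x\in\Omega$ pick the $z_{x}\in Z$ with $\delta_{x}=\varphi(\cdot)(z_{x})$ and set $\widetilde{\delta}(x)\coloneqq z_{x}$. The linearisation identity is then automatic, since $\varphi(f)(\widetilde{\delta}(x))=\varphi(f)(z_{x})=\delta_{x}(f)=f(x)$ for all $f\in\F$ and $x\in\Omega$. The one point requiring verification is that $\widetilde{\delta}\in\mathcal{F}(\Omega,Z)_{\sigma}$: given $z^{\ast}\in Z^{\ast}$, write $z^{\ast}=\varphi(g)$ for the unique $g\in\F$ (as $\varphi$ is onto $Z^{\ast}$); then $z^{\ast}\circ\widetilde{\delta}=[x\mapsto\varphi(g)(z_{x})]=[x\mapsto\delta_{x}(g)]=g\in\F$. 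Hence $(\widetilde{\delta},Z,\varphi)$ is a strong isometric Banach linearisation of $\F$, so \prettyref{thm:isometric_sB_linearisation_in_equivalence_class} yields that $(Z,\varphi)$ and $(\F_{\gamma}',\mathcal{I})$ are isometrically equivalent. Combining with \prettyref{prop:isom_strongly_unique_equivalent} completes (b)$\Rightarrow$(a). I expect the main subtlety to be exactly this reduction to the equivalence relation via \prettyref{thm:isometric_sB_linearisation_in_equivalence_class}: it is what lets us trade the missing linear independence for the geometric fact that every isometric Banach predual sits in a single isometric equivalence class.
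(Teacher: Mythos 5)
Your proof is correct, and its implication (a)$\Rightarrow$(b) coincides with the paper's (both apply \prettyref{prop:strongly_unique_isom_predual} to the linearisation $(\Delta,\F_{\gamma}',\mathcal{I})$ furnished by \prettyref{rem:point_eval_in_predual} and \prettyref{cor:siB_linearisation}). For (b)$\Rightarrow$(a) the two arguments share the same skeleton --- reduce, via \prettyref{prop:isom_strongly_unique_equivalent} and the fact that isometric equivalence is symmetric and transitive, to showing that every isometric Banach predual $(Z,\varphi)$ is isometrically equivalent to $(\F_{\gamma}',\mathcal{I})$, and obtain that equivalence from \prettyref{thm:isometric_sB_linearisation_in_equivalence_class} --- but you execute the key step differently and more economically. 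You verify hypothesis (a) of that theorem for $(Z,\varphi)$ itself: you define $\widetilde{\delta}(x)\coloneqq z_{x}$ straight from assertion (b), check $\varphi(f)\circ\widetilde{\delta}=f$ and $\widetilde{\delta}\in\mathcal{F}(\Omega,Z)_{\sigma}$ by two one-line computations (the second is in fact automatic for any strong isometric Banach linearisation, since every $z^{\ast}\in Z^{\ast}$ has the form $\varphi(g)$ with $g\in\F$, but checking it explicitly is harmless). The paper never applies the theorem to $(Z,\varphi)$ directly; instead it introduces the auxiliary semi-Montel Saks space $(\F,\|\cdot\|,\sigma_{\varphi}(\F,Z))$ via \prettyref{rem:semireflexive_pre_Saks_already_Saks} (e), shows $\delta_{x}\in\F_{\gamma_{\varphi}}'$ for $\gamma_{\varphi}\coloneqq\gamma(\|\cdot\|,\sigma_{\varphi}(\F,Z))$ using results from \cite{kruse2023a}, applies the theorem to the auxiliary predual $(\F_{\gamma_{\varphi}}',\mathcal{I}_{\varphi})$ produced by \prettyref{cor:siB_linearisation}, and then needs a further result of \cite{kruse2023a} together with \prettyref{prop:equiv_implies_isom_equiv} to identify $(Z,\varphi)$ with $(\F_{\gamma_{\varphi}}',\mathcal{I}_{\varphi})$ up to isometric equivalence before concluding by transitivity. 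Your route eliminates the auxiliary predual and the external citations at no cost in rigour; what the paper's detour buys is mainly structural, namely that it runs parallel to the non-isometric argument of \cite{kruse2023a} and reuses the weak-$\ast$ Saks space machinery already developed there.
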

\begin{proof}
(a)$\Rightarrow$(b) This implication follows from \prettyref{prop:strongly_unique_isom_predual} since 
the triple $(\Delta,\F_{\gamma}',\mathcal{I})$ is a strong isometric Banach linearisation 
by the proof of \prettyref{thm:existence_siB_linearisation} and 
$\mathcal{I}(f)(\Delta(x))=\mathcal{I}(f)(\delta_{x})=\delta_{x}(f)$ for all $f\in\F$ and $x\in\Omega$. 

(b)$\Rightarrow$(a) Let $(Z,\varphi)$ be an isometric Banach predual of $(\F,\tau)$.     
Due to \prettyref{rem:semireflexive_pre_Saks_already_Saks} (e) $B_{\|\cdot\|}$ is $\sigma_{\varphi}(\F,Z)$-compact and 
$(\F,\|\cdot\|,\sigma_{\varphi}(\F,Z))$ a semi-Montel Saks space. 
By assumption for every $x\in\Omega$ there is $z_{x}\in Z$ with $\delta_{x}=\varphi(\cdot)(z_{x})$ and 
thus $\delta_{x}\in \F_{\gamma_{\varphi}}'$ by \cite[Proposition 3.21 (c), p.~1605]{kruse2024} and 
\cite[Proposition 3.16, p.~1602]{kruse2024} where $\gamma_{\varphi}\coloneqq \gamma(\|\cdot\|,\sigma_{\varphi}(\F,Z))$. 
Hence $(\Delta,\F_{\gamma_{\varphi}}',\mathcal{I}_{\varphi})$ is a strong isometric Banach linearisation of $\F$ 
by \prettyref{cor:siB_linearisation} with the evaluation map
\[
\mathcal{I}_{\varphi}\colon(\F,\|\cdot\|)\to (\F_{\gamma_{\varphi}}',\|\cdot\|_{\F_{\gamma_{\varphi}}'})^{\ast},\; 
f\longmapsto [f'\mapsto f'(f)].
\]
From \prettyref{thm:isometric_sB_linearisation_in_equivalence_class} we deduce that 
$(\F_{\gamma_{\varphi}}',\mathcal{I}_{\varphi})$ and $(\F_{\gamma}',\mathcal{I})$ are isometrically equivalent. 
Due to \cite[Proposition 3.21 (b), p.~1605]{kruse2024} and \prettyref{prop:equiv_implies_isom_equiv} we also know that 
$(\F_{\gamma_{\varphi}}',\mathcal{I}_{\varphi})$ and $(Z,\varphi)$ are isometrically equivalent. This implies that 
$(Z,\varphi)$ and $(\F_{\gamma}',\mathcal{I})$ are isometrically equivalent. Therefore $(\F,\|\cdot\|)$ has a strongly unique 
isometric Banach predual by \prettyref{prop:isom_strongly_unique_equivalent}.
\end{proof}

\begin{cor}\label{cor:unique_isom_predual_without_lin_ind}
Let $(\F,\|\cdot\|,\tau)$ be a semi-reflexive Saks space of $\K$-valued functions on a non-empty set $\Omega$ such that 
$\tau_{\operatorname{p}}\leq\tau$. 
Then the following assertions are equivalent. 
\begin{enumerate}
\item[(a)] $(\F,\|\cdot\|)$ has a unique isometric Banach predual. 
\item[(b)] For every isometric Banach predual $(Z,\varphi)$ of $(\F,\|\cdot\|)$ there is 
an isometric isomorphism $\psi\colon(\F,\|\cdot\|)\to Z^{\ast}$ such that for every $x\in\Omega$ there is a (unique) $z_{x}\in Z$ with $\delta_{x}=\psi(\cdot)(z_{x})$.
\end{enumerate}
\end{cor}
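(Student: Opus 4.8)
The plan is to mirror the proof of \prettyref{cor:strongly_unique_isom_predual_without_lin_ind}, replacing its appeals to the strongly unique machinery by the unique analogues, i.e.\ \prettyref{prop:strongly_unique_isom_predual} by \prettyref{cor:unique_isom_predual} and \prettyref{prop:isom_strongly_unique_equivalent} by \prettyref{prop:isom_unique_equivalent}. The distinguished object throughout is the strong isometric Banach linearisation $(\Delta,\F_{\gamma}',\mathcal{I})$, which exists because $\tau_{\operatorname{p}}\leq\tau$ by \prettyref{cor:siB_linearisation} and \prettyref{rem:point_eval_in_predual}, and which satisfies $\mathcal{I}(f)(\Delta(x))=\mathcal{I}(f)(\delta_{x})=\delta_{x}(f)=f(x)$, so that $\mathcal{I}(\cdot)(\Delta(x))=\delta_{x}$. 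For the implication (a)$\Rightarrow$(b) I would simply invoke \prettyref{cor:unique_isom_predual}: its implication (a)$\Rightarrow$(b), applied to the linearisation $(\Delta,\F_{\gamma}',\mathcal{I})$ and combined with the identity $\mathcal{I}(\cdot)(\Delta(x))=\delta_{x}$, yields exactly assertion (b) of the present corollary, with no linear independence needed.

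The substantive direction is (b)$\Rightarrow$(a), and the whole point is to dispense with the linear independence hypothesis that \prettyref{cor:unique_isom_predual} requires for (b)$\Rightarrow$(a). Let $(Z,\varphi)$ be an arbitrary isometric Banach predual of $(\F,\|\cdot\|)$. By assertion (b) there is an isometric isomorphism $\psi\colon\F\to Z^{\ast}$ (in general \emph{different} from $\varphi$) and, for each $x\in\Omega$, an element $z_{x}\in Z$ with $\delta_{x}=\psi(\cdot)(z_{x})$. I would then pass to the isometric Banach predual $(Z,\psi)$: by \prettyref{rem:semireflexive_pre_Saks_already_Saks} (e) the ball $B_{\|\cdot\|}$ is $\sigma_{\psi}(\F,Z)$-compact and $(\F,\|\cdot\|,\sigma_{\psi}(\F,Z))$ is a semi-Montel Saks space, and setting $\gamma_{\psi}\coloneqq\gamma(\|\cdot\|,\sigma_{\psi}(\F,Z))$ the relation $\delta_{x}=\psi(\cdot)(z_{x})$ gives $\delta_{x}\in\F_{\gamma_{\psi}}'$ for all $x$ by \cite[Proposition 3.24 (c), p.~21]{kruse2023a} and \cite[Proposition 3.17, p.~18]{kruse2023a}. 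Hence $(\Delta,\F_{\gamma_{\psi}}',\mathcal{I}_{\psi})$ is a strong isometric Banach linearisation of $\F$ by \prettyref{cor:siB_linearisation}, with evaluation map $\mathcal{I}_{\psi}\colon(\F,\|\cdot\|)\to(\F_{\gamma_{\psi}}',\|\cdot\|_{\F_{\gamma_{\psi}}'})^{\ast}$, $f\mapsto[f'\mapsto f'(f)]$. Now \prettyref{thm:isometric_sB_linearisation_in_equivalence_class} shows that $(\F_{\gamma_{\psi}}',\mathcal{I}_{\psi})$ and $(\F_{\gamma}',\mathcal{I})$ are isometrically equivalent, while \cite[Proposition 3.24 (b), p.~21]{kruse2023a} together with \prettyref{prop:equiv_implies_isom_equiv} show that $(\F_{\gamma_{\psi}}',\mathcal{I}_{\psi})$ and $(Z,\psi)$ are isometrically equivalent; by transitivity $(Z,\psi)$ and $(\F_{\gamma}',\mathcal{I})$ are isometrically equivalent.

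Finally, I would assemble these facts into uniqueness via \prettyref{prop:isom_unique_equivalent}. Given two isometric Banach preduals $(Y,\varphi_{1})$ and $(Z,\varphi_{2})$, the previous step supplies isometric isomorphisms $\psi_{1}\colon\F\to Y^{\ast}$ and $\psi_{2}\colon\F\to Z^{\ast}$ with $(Y,\psi_{1})$ and $(Z,\psi_{2})$ each isometrically equivalent to $(\F_{\gamma}',\mathcal{I})$, hence to one another; let $\lambda\colon Y\to Z$ be the isometric isomorphism witnessing this equivalence, so $\lambda^{t}=\psi_{1}\circ\psi_{2}^{-1}$. Putting $\mu\coloneqq(\lambda^{-1})^{t}\circ\varphi_{1}\colon\F\to Z^{\ast}$, which is again an isometric isomorphism, one checks $\lambda^{t}=\varphi_{1}\circ\mu^{-1}$, so that $(Y,\varphi_{1})$ and $(Z,\mu)$ are isometrically equivalent; this verifies assertion (b) of \prettyref{prop:isom_unique_equivalent}, whence $(\F,\|\cdot\|)$ has a unique isometric Banach predual. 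The main obstacle is exactly the absence of linear independence: in contrast to the strongly unique corollary, assertion (b) only controls a possibly modified predual map $\psi$ rather than the given $\varphi$, so one cannot force all original tuples $(Z,\varphi)$ to be mutually isometrically equivalent, and \prettyref{thm:isometric_sB_linearisation_in_equivalence_class} is precisely the device that still yields an isometric isomorphism $Y\to Z$ — indeed the equivalence $(Y,\psi_{1})\sim(Z,\psi_{2})$ already produces such an isomorphism, which is all that \prettyref{defn:isom_unique_predual} (a) demands.
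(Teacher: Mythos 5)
Your proposal is correct and takes essentially the same route as the paper: the paper's own proof is exactly the combination of \prettyref{cor:unique_isom_predual} applied to $(\Delta,\F_{\gamma}',\mathcal{I})$ for (a)$\Rightarrow$(b) with a rerun of the proof of \prettyref{cor:strongly_unique_isom_predual_without_lin_ind} in which $\varphi$ is replaced by $\psi$, invoking \prettyref{prop:isom_unique_equivalent} instead of \prettyref{prop:isom_strongly_unique_equivalent} at the end. Your explicit construction of $\mu\coloneqq(\lambda^{-1})^{t}\circ\varphi_{1}$ merely spells out the detail that the paper leaves implicit in this final substitution.
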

\begin{proof}
This statement follows from \prettyref{cor:unique_isom_predual} and the 
proof of \prettyref{cor:strongly_unique_isom_predual_without_lin_ind} with $\varphi$ replaced by $\psi$ and 
\prettyref{prop:isom_unique_equivalent} instead of \prettyref{prop:isom_strongly_unique_equivalent} in the end. 
\end{proof}

As an application of \prettyref{cor:strongly_unique_isom_predual_without_lin_ind} we consider the following simple example, 
which is \cite[Example 2.1, p.~413]{davidson2011}.

\begin{exa}\label{ex:linfty_str_unique_isom_Bpredual}
We show that the Banach space $(\ell^{\infty},\|\cdot\|_{\infty})$ of complex bounded sequences on 
$\N$ equipped with the supremum norm has a strongly unique isometric Banach predual. 
Due to \prettyref{ex:subspace_cont_mixed} (i) the triple $(\ell^{\infty},\|\cdot\|_{\infty},\tau_{\operatorname{co}})$ is 
a semi-Montel Saks space and $\tau_{\operatorname{p}}\leq\tau_{\operatorname{co}}$ (the space $\N$ equipped with the metric induced by the absolute value is discrete).
%We define the isometric isomorphism 
%\[
%\varphi_{1}\colon (\ell^{\infty},\|\cdot\|_{\infty})\to (\ell^{1},\|\cdot\|_{1})^{\ast},\;
%\varphi_{1}(x)(y)\coloneqq \sum_{k=1}^{\infty}x_{k}y_{k},
%\]
%Then the tuple $(\ell^{1},\varphi_{1})$ is an isometric Banach predual of $\ell^{\infty}$ and we note that
%$
%x_{n}=\varphi_{1}(x)(e_{n})
%$
%for all $x\in \ell^{\infty}$ and $n\in\N$ where $e_{n}$ denotes the $n$-th unit sequence. Setting 
%$\widetilde{\delta}\colon\N\to \ell^{1}$, $\widetilde{\delta}(n)\coloneqq e_{n}$, we get the strong isometric Banach linearisation 
%$(\widetilde{\delta},\ell^{1},\varphi_{1})$ of $\ell^{\infty}$, which is equivalent by \prettyref{thm:existence_siB_linearisation} 
%to the existence of a locally convex Hausdorff topology $\tau_{\operatorname{p}}\leq\tau\leq\tau_{\|\cdot\|_{\infty}}$ on 
%$\ell^{\infty}$ such tha 
%(choose e.g.~$\tau\coloneqq \sigma_{\varphi_{1}}(\ell^{\infty},\ell^{1})$ or $\tau\coloneqq\tau_{\operatorname{co}}$ or 
%$\tau\coloneqq\tau_{\operatorname{p}}$). 
Let $(Z,\varphi)$ be an isometric Banach predual of $(\ell^{\infty},\|\cdot\|_{\infty})$ and $n\in\N$. 
As in \cite[Example 2.1, p.~413]{davidson2011} we observe that 
\begin{align*}
 \ker(\delta_{n})\cap B_{\|\cdot\|_{\infty}}
&=\{x\in\ell^{\infty}\;|\;x_{n}=0\text{ and }\|x\|_{\infty}=\sup_{k\in\N}|x_{k}|\leq 1\}\\
&=\{x\in\ell^{\infty}\;|\;|x_{n}-1|\leq 1,\;|x_{n}+1|\leq 1\text{ and }\sup_{k\in\N,k\neq n}|x_{k}|\leq 1\}\\
&=B_{\|\cdot\|_{\infty}}(e_{n})\cap B_{\|\cdot\|_{\infty}}(-e_{n}).
\end{align*}
Due to \prettyref{rem:semireflexive_pre_Saks_already_Saks} (e) $B_{\|\cdot\|_{\infty}}(e_{n})$ and $B_{\|\cdot\|_{\infty}}(-e_{n})$ 
are $\sigma_{\varphi}(\ell^{\infty},Z)$-compact, which implies that $\ker(\delta_{n})\cap B_{\|\cdot\|_{\infty}}$ is 
$\sigma_{\varphi}(\ell^{\infty},Z)$-compact as well. This yields that for every $n\in\N$ there is $z_{n}\in Z$ such that 
$\delta_{n}=\varphi(\cdot)(z_{n})$ by \prettyref{rem:weakly_continuous}. We conclude that $(\ell^{\infty},\|\cdot\|_{\infty})$ has 
a strongly unique isometric Banach predual by \prettyref{cor:strongly_unique_isom_predual_without_lin_ind}.
\end{exa}

Even though $\ell^{\infty}$ has a strongly unique isometric Banach predual by \prettyref{ex:linfty_str_unique_isom_Bpredual}, 
it does not have a unique Banach predual by \cite[Example 6.4.3, p.~200]{dales2016}. Moreover, it follows 
from \prettyref{ex:subspace_cont_mixed} (i) that the spaces $\ell v(\N)$ have a strongly unique isometric Banach predual. 
For some of the other spaces from \prettyref{ex:subspace_cont_mixed} it is also known that they have a 
strongly unique isometric Banach predual. For instance $H^{\infty}=\mathcal{H}w(\D)$ with $w(z)\coloneqq 1$ for $z\in\D$ 
and $\mathrm{Lip}_{0}(\Omega)$, if $\Omega$ has a finite diameter or is a complete convex metric space, 
have a strongly unique isometric Banach predual by \cite[Theorem 1, p.~34]{ando1978} and 
\cite[Theorems 3.2, 3.3, p.~471--472]{weaver2018a}.\footnote{For $\mathrm{Lip}_{0}(\Omega)$ it seems to be an open problem 
again whether it has a strongly unique isometric Banach predual if $\Omega$ has a finite diameter or is a complete convex metric space, see \cite{weaver2018b}.}

\bibliography{biblio_linearisation_mixed_isometric}
\bibliographystyle{plainnat}
\end{document}